\documentclass[10pt]{article}
\usepackage{amsmath}
\usepackage{amssymb}
\usepackage{stackrel,amssymb}
\usepackage{amsfonts}
\usepackage{amsthm}
\usepackage{slashed}
\usepackage{tikz-cd}
\tikzset{
    vert/.style={anchor=south, rotate=90, inner sep=.5mm}
}
\usepackage{mathrsfs}
\usepackage[all]{xy}
\usepackage{mathtools}

\pdfpagewidth 8.5in
\pdfpageheight 11in
\usepackage{mathabx,epsfig}

\usepackage{amssymb,amsfonts}
\usepackage{enumerate}
\usepackage{enumitem}
\usepackage{mathrsfs}
\usepackage{stmaryrd}
\usepackage{hyperref}
\usepackage{cleveref}

\usepackage{relsize}
\usepackage[bbgreekl]{mathbbol}
\usepackage{amsfonts}
\DeclareSymbolFontAlphabet{\mathbb}{AMSb} 
\DeclareSymbolFontAlphabet{\mathbbl}{bbold}

\newtheorem{thm}{Theorem}[subsection]
\newtheorem{cor}[thm]{Corollary}
\newtheorem{prop}[thm]{Proposition}
\newtheorem{lem}[thm]{Lemma}

\theoremstyle{definition}
\newtheorem{defn}[thm]{Definition}

\newtheorem{exmp}[thm]{Example}

\theoremstyle{remark}
\newtheorem{rem}[thm]{Remark}

\newtheorem{warn}[thm]{Warning}





\begin{document}

\newcommand\restr[2]{{
  \left.\kern-\nulldelimiterspace 
  #1 
  \vphantom{\big|} 
  \right|_{#2} 
  }}

\makeatletter
\renewcommand{\@seccntformat}[1]{%
  \ifcsname prefix@#1\endcsname
    \csname prefix@#1\endcsname
  \else
    \csname the#1\endcsname\quad
  \fi}
\makeatother

\makeatletter
\newcommand{\colim@}[2]{%
  \vtop{\m@th\ialign{##\cr
    \hfil$#1\operator@font colim$\hfil\cr
    \noalign{\nointerlineskip\kern1.5\ex@}#2\cr
    \noalign{\nointerlineskip\kern-\ex@}\cr}}%
}
\newcommand{\colim}{%
  \mathop{\mathpalette\colim@{\rightarrowfill@\textstyle}}\nmlimits@
}
\makeatother

\newcommand\rightthreearrow{%
        \mathrel{\vcenter{\mathsurround0pt
                \ialign{##\crcr
                        \noalign{\nointerlineskip}$\rightarrow$\crcr
                        \noalign{\nointerlineskip}$\rightarrow$\crcr
                        \noalign{\nointerlineskip}$\rightarrow$\crcr
                }%
        }}%
}

\title{Completed $K$-theory and Equivariant Elliptic Cohomology}         
\author{Kiran Luecke}        
\date{\today}          
\maketitle

\begin{abstract}
Kitchloo and Morava give a strikingly simple picture of elliptic cohomology at the Tate curve by studying a completed version of $S^1$-equivariant $K$-theory for spaces. Several authors (cf [ABG],[KM],[L]) have suggested that an equivariant version ought to be related to the work of Freed-Hopkins-Teleman  ([FHT1],[FHT2],[FHT3]). However, a first attempt at this runs into apparent contradictions concerning twist, degree, and cup product. Several authors (cf. [BET],[G],[K]) have solved the problem over the complex numbers by interpreting the $S^1$-equivariant parameter as a complex variable and using holomorphicity as the technique for completion. This paper gives a solution that works integrally, by constructing a carefully completed model of $K$-theory for $S^1$-equivariant stacks which allows for certain ``convergent" infinite-dimensional cocycles. 
\end{abstract}

\tableofcontents

\section{Introduction}
In [KM], Kitchloo and Morava construct an equivariant cohomology theory for CW-spaces with an $S^1$-action by taking ordinary $S^1$-equivariant $K$-theory and completing the coefficient ring $K^{*}_{S^1}(\text{pt})\simeq\mathbb{Z}[q^{\pm}]$ in positive $q$-powers. For a finite $CW$-complex $M$ they define
$$\vec{K}^{*}_{S^1}(M):=K^{*}_{S^1}(M)\otimes_{\mathbb{Z}[q^{\pm}]}\mathbb{Z}((q))\ \ \ \ \ \ \ \ \ \ \mathbb{Z}((q)):=\mathbb{Z}[q^{-1}][[q]].$$
They extend this to a theory on infinite complexes by taking limits over finite skeleta. It satisfies a strong localization theorem for finite complexes: the inclusion of the fixed point set $j:M^{S^1}\hookrightarrow M$ induces an isomorphism
$$j^{*}:\vec{K}^{*}_{S^1}(M)\xrightarrow{\sim}\vec{K}^{*}_{S^1}(M^{S^1}).$$
Recall that $S^1$ acts on the free loop space $LM$ by loop rotation. Kitchloo and Morava show that the assignment
$$M\mapsto\vec{K}^{*}_{S^1}(LM)$$
 defines a cohomology theory. The localization theory together with the formula $LM^{S^1}=M$ shows that as a cohomology theory the above is just $K$-theory with coefficients in $\mathbb{Z}((q))$. But their insight is that this construction naturally gives more than a cohomology theory. Recall that a multiplicative cohomology theory $E$ is called an \textit{elliptic cohomology theory}\footnote{This definition is due to Mike Hopkins in his 1994 ICM address.} if it is
 
 \begin{enumerate}[itemsep=0pt]
\item weakly even: $E^{2}(\text{pt})\otimes_{E^{0}(\text{pt})} E^{n}(\text{pt})\rightarrow E^{n+2}(\text{pt})$  is a isomorphism for all $n$; 
\item comes with the data of an elliptic curve $\mathcal{E}$ over $E^{0}(\text{pt})$;
\item comes with an isomorphism of formal groups Spf$E^{0}(\mathbb{CP}^{\infty})\rightarrow\hat{\mathcal{E}}$, where the first object carries the group structure induced by the map $\mathbb{CP}^{\infty}\times\mathbb{CP}^{\infty}\rightarrow\mathbb{CP}^{\infty}$ classifying the tensor product of line bundles, and the second object is the formal completion of $\mathcal{E}$ at the identity.
 \end{enumerate}
The key to extracting an elliptic theory is the identification of a natural complex orientation coming from the Atiyah-Bott-Shapiro \textit{spin orientation} of the normal bundle of the fixed point locus $M\hookrightarrow LM$. The elliptic curve relevant to the Kitchloo-Morava construction is the \textit{Tate curve}, an elliptic curve over $\mathbb{Z}((q))$. Thus
$$Ell_{\text{Tate}}^{*}(M)\simeq\vec{K}^{*}_{S^1}(LM).$$
The simplicity of this method of producing a $K$-theoretic picture of elliptic cohomology at the Tate curve suggests that a similar approach might work in an equivariant setting. However, a few subtleties arise. 
\begin{defn}\label{loopgroupoid} For a topological groupoid $\mathbf{X}=(X_1\rightrightarrows X_0)$, define its loop groupoid $\mathcal{L}\mathbf{X}$ (sometimes called the \textit{inertia groupoid}) the free loop space object in the (2,1)-category of topological groupoids. i.e. the pullback of the diagonal $\Delta:\mathbf{X}\rightarrow\mathbf{X}\times\mathbf{X}$ along itself.. Explicitly, $\mathcal{L}\mathbf{X}$ has as its topological space of objects the set of functors $\text{pt}/\mathbb{Z}\rightarrow\mathbf{X}$ topologized as a subspace of $X_1$ (the space of morphisms of $\mathbf{X}$) and has as its space of morphisms the natural transformations of such functors, again topologized as a subspace of $X_1$.
\end{defn}
\noindent
The next step in the Kitchloo-Morava construction would be to define an equivariant theory like this
$$M/G\ \mapsto\ \vec{K}^{*}_{S^1}(\mathcal{L}(M/G)):=K^{*}_{S^1}(\mathcal{L}(M/G))\otimes_{\mathbb{Z}[q^{\pm}]}\mathbb{Z}((q)).$$
However, that does not have the desired relationship with equivariant elliptic cohomology. For example, suppose that $M$ is a point and that $G$ is simple and simply-connected. By the work of Freed, Hopkins, and Teleman ([FHT3], Theorem 5) this proposed elliptic group, suitably twisted by an element $k\in H^4(BG)\simeq\mathbb{Z}$, is concentrated in degree dim$G$, where it is isomorphic to the positive energy level $k-h$ ($h$ is the dual Coxeter number of $G$) representation ring of the semidirect product $LG\rtimes S^1$ of the loop group $LG$ with $S^1$, the latter acting on the former by loop rotation. So if dim$G$ is odd this trivially fails the first requirement of an elliptic cohomology theory, and the cup product is zero. Even if dim$G$ is even, the presence of the dual Coxeter shift violates the expectation that $k$-twisted $G$-equivariant elliptic classes of degree zero over the point are in bijective correspondence with equivalence classes of level $k$ positive energy representations of $LG\rtimes S^1$ (c.f. [Se2], [Se3]). Over the complex numbers these issues have been resolved by several authors; the common theme among them is to consider $q=e^{2\pi i\tau}$ as a complex variable on a certain moduli space and use holomorphicity as the method of completion (as opposed to the naive base-change proposed above). For example, Grojnowski's delocalized equivariant elliptic theory [G] assigns to a $G$-space $X$ a holomorphic sheaf on the moduli space of bundles over the elliptic curve, constructed by patching together local sections defined using equivariant singular cohomology with complex coefficients. The connection to positive energy representation theory is made by using the Kac character formula to identify characters of those representations with sections of the sheaf. Berwick-Evans and Tripathy [BET] have constructed a de Rham model refining Grojnowski's theory to a holomorphic sheaf of commutative differential graded algebras. For simple and simply-connected groups Kitchloo [K], in a method most similar to the one that will be presented here, uses a version of $LG$-equivariant $K$-theory built out of positive energy representations  to construct a holomorphic sheaf together with a character map from positive energy representations to sections of this sheaf. 

The purpose of the present paper is to construct a new model $E_G$ of equivariant elliptic cohomology at the Tate curve that satisfies the following conditions: 
\begin{enumerate}[itemsep=-1pt]
\item for an arbitrary compact Lie group $G$ the theory $E_G$ is defined integrally, i.e. no prime $p$ is invertible in the coefficient ring $E_G^*$
\item it admits twists $^\tau E_G$ by elements $\tau\in H^4BG$,
\item when the group $G$ is trivial $E_G$ recovers the Kitchloo-Morava theory described at the beginning of this section, 
\item the construction of $E_G$ does not reference the positive energy representation theory of loop groups in any way,
\item there is, \textit{a fortiori}, a natural map from the category $\text{Rep}_\text{pos}^\tau(LG\rtimes S^1)$ of level $\tau>0$ positive energy representations to $^\tau E_G^*(\text{pt})$ which for connected groups becomes an isomorphism of $\mathbb{Z}((q))$-modules after factoring through the Grothendieck group,
\item  and after tensoring with $\mathbb{C}$ it recovers, in a suitable sense, the previously defined theories of Grojnowski and Kitchloo. 
\end{enumerate}
As an added bonus, the construction will naturally extend to negative twists and exhibit a duality between positive and negative twist that the more imaginative reader might like to interpret as a manifestation of Serre duality on the moduli space of $G$-bundles over the Tate curve. 

\

\textit{Acknowledgements}: I would like to thank Andre Henriques, Dan Berwick-Evans, and Mike Hopkins for helpful conversations and valuable advice. I would like to thank Dan Berwick-Evans again, as well as Mikayla Kelley, for comments and corrections to previous drafts. The paper would not have been written without the guidance and support of Constantin Teleman. Many of the ideas here are due to him. Anyone who is familiar with the Freed-Hopkins-Teleman trilogy (+$\epsilon$) on twisted $K$-theory will see the shameless similarity in technique and philosophy. Lastly I would like to thank an anonymous referee for extensive and invaluable comments, suggestions, and corrections.

\

\textit{Some Definitions and Disclaimers}: Rings, abelian groups, and Hilbert spaces are assumed to be $\mathbb{Z}/2\mathbb{Z}$-graded, unless it is stated otherwise. As is standard, the \textit{loop group} $LG$ of a Lie group $G$ is the group of \textit{smooth} maps $S^1\rightarrow G$ with the topology of uniform convergence. $C_2$ is the cyclic group of order 2. Starting in Section 2, all groupoids considered in this paper are \textit{topological groupoids} (c.f. Definition \ref{topgroupoid}) unless otherwise stated. The following string of definitions sets the stage for the context in which these groupoids are considered.

\begin{defn} The category $\text{Top}$ is defined to be the category whose objects are topological spaces that are homotopy equivalent to a $CW$-complexes and whose morphisms are continuous maps.
\end{defn}

\begin{defn}\label{topgroupoid}
A \textit{topological groupoid} $\mathbf{X}=(X_{1}\rightrightarrows X_{0})$ is a groupoid object in the category $\text{Top}$. A morphism of topological groupoids is a morphism of the corresponding diagrams in $\text{Top}$.
\end{defn}

\begin{defn}
Let $\mathbf{X}=(X_{1}\rightrightarrows X_{0})$ be a topological groupoid. The \textit{coarse quotient} $[\mathbf{X}]$ is the quotient of $X_0$ under the equivalence relation defined by $x\sim y$ if there is a morphism from $x$ to $y$. In other words, it is the topological space of isomorphism classes of objects of $\mathbf{X}$.
\end{defn}

\begin{defn}\label{localequiv} A map of topological groupoids $\mathbf{X}\rightarrow\mathbf{Y}$ is a \textit{local equivalence} ([FHT1] Definition A.4) if the induced map of (discrete) groupoid-valued presheaves on $\text{Top}$ $\text{Hom}(-,\mathbf{X})\rightarrow\text{Hom}(-,\mathbf{Y})$ is an equivalence on stalks. Two topological groupoids $\mathbf{X}$ and $\mathbf{Y}$ are said to be \textit{weakly equivalent} if there is a diagram of local equivalences $\mathbf{X}\leftarrow\mathbf{Z}\rightarrow\mathbf{Y}$. Although it will not be needed, some readers may take comfort in the fact that weakly equivalent topological groupoids present the same underlying stack on the site of topological spaces (c.f. [FHT1] Remark A.5).
\end{defn}

\begin{defn}\label{localquotient}
A topological groupoid $\mathbf{X}=(X_{1}\rightrightarrows X_{0})$ is called a \textit{local quotient} groupoid if there exists a countable open cover $\{U_{\alpha}\}$ of the coarse quotient $[\mathbf{X}]$ such that the full subgroupoid associated to each $U_{\alpha}$ is weakly equivalent (c.f. Definition \ref{localequiv}) to the quotient groupoid of a Hausdorff space $M$ by a compact Lie group $K$, denoted in this paper by $M/K$.
\end{defn}

 \section{$S^1$-Completed K-theory}

In this section I construct a completed version of twisted $S^1$-equivariant $K$-theory for certain \textit{$B\mathbb{Z}$-groupoids}, which are defined as follows.
\begin{defn}\label{BZdef}
A \textit{$B\mathbb{Z}$-groupoid} is a pair\footnote{To streamline notation I reserve the right to refer to a $B\mathbb{Z}$-groupoid $(\mathbf{X},\alpha)$ by the name of the groupoid $\mathbf{X}$ and leave the $B\mathbb{Z}$-action  $\alpha$ as a mystery to be revealed as required.} $(\mathbf{X},\alpha)$ consisting of a groupoid $\mathbf{X}=(X_{1}\rightrightarrows X_{0})$ and a $B\mathbb{Z}$-action $\alpha$, i.e. an automorphism of the identity functor $\alpha: 1_{\mathbf{X}}\Rightarrow1_{\mathbf{X}}$. A morphism (``$B\mathbb{Z}$\textit{-equivariant map}") of $B\mathbb{Z}$-groupoids $(\mathbf{X},\alpha)\rightarrow (\mathbf{X}',\alpha')$ is a morphism of topological groupoids $F:\mathbf{X}\rightarrow \mathbf{X}'$ such that for every $x\in X_0$, $F(\alpha(x))=\alpha'(F(x))$. A \textit{$B\mathbb{Z}$-subgroupoid} of a $B\mathbb{Z}$-groupoid $(\mathbf{X},\alpha)$ is a $B\mathbb{Z}$-groupoid $(\mathbf{X}',\alpha')$ such that $\mathbf{X}'$ is a subgroupoid of $\mathbf{X}$ containing all components $\alpha(x')$, $x'\in X_0'\subset X_0$ and $\alpha'$ agrees with the restriction of $\alpha$ to $\mathbf{X}'$. A $B\mathbb{Z}$-groupoid $(\mathbf{X},\alpha)$ is called \textit{trivial} if $\alpha$ is the trivial automorphism of the identity.
\end{defn}

To model the `quotient' of a $B\mathbb{Z}$-groupoid by its $B\mathbb{Z}$-action I make the following definition.

\begin{defn}\label{BZquotient}
For a $B\mathbb{Z}$-groupoid $(\mathbf{X},\alpha)$ define the \textit{$B\mathbb{Z}$-quotient} $\mathbf{X}/B\mathbb{Z}$ to be groupoid whose space of objects is $X_{0}$ and whose space of morphisms is $(X_{1}\times\mathbb{R})/\mathbb{Z}$, where $\mathbb{Z}$ acts as follows: if $s$ denotes the source morphism of $\mathbf{X}$, then for $n\in\mathbb{Z}$, $n\cdot(p,r):=(p\alpha(s(p))^{-n},r+n).$ This is functorial in the $B\mathbb{Z}$-groupoid.
\end{defn}

\begin{exmp}\label{loopgroupoid}
The motivating example of a $B\mathbb{Z}$-groupoid is the \textit{loop groupoid} $\mathcal{L}(M/G)$ of a global quotient $M/G$, which is again a global quotient: its space of objects is the subspace of $M\times G$ of pairs $\{(m,g)\in M\times G|\ gm=m\}$ and $G$ acts by translation on the first factor and conjugation on the second. The $B\mathbb{Z}$-action is the automorphism of the identity functor whose component at the object $(m,g)$ is $g$. 
\end{exmp}

When $M$ is a point this is the quotient groupoid associated to $G$ acting on itself by conjugation, and when $G$ is connected the $B\mathbb{Z}$-quotient admits a (possibly enlightening) second description, up to equivalence: let $\mathcal{A}(G)$ be the space of connections on the trivial principal $G$-bundle over $S^1$ and $\mathbb{L}G$ the group of smooth bundle isomorphisms covering rigid rotations of the base $S^1$. Note that $\mathbb{L}G\simeq LG\rtimes S^1$. Then
$$\mathcal{L}(\text{pt}/G)/B\mathbb{Z}\simeq\mathcal{A}(G)/\mathbb{L}G.$$
The map inducing the equivalence goes from right to left and is given by taking the holonomy of a connection (c.f. [FHT2] Section 2.1).

\subsection{Central extensions and twists}

This subsection describes the model for \textit{twists} used in this paper.
\begin{defn} (c.f. [FHT1] Section 2.2) A groupoid is said to be \textit{graded} if it is equipped with a functor  $\epsilon$ to $\text{pt}/C_2$. A \textit{graded central extension} $\text{pt}/U(1)\rightarrow\mathbf{L}\rightarrow\mathbf{X}$ of a groupoid $\mathbf{X}=(X_{1}\rightrightarrows X_{0})$ is a graded groupoid $\mathbf{L}=(L\rightrightarrows X_{0},\epsilon)$ and a functor $P:\mathbf{L}\rightarrow\mathbf{X}$ which is the identity map on objects and is such that the induced map $P_{1}:L\rightarrow X_{1}$ is a principal $U(1)$-bundle.
The category $\mathfrak{Ext}_{\mathbf{X}}$ is defined to have objects the graded central extensions of $\mathbf{X}$, and a morphism $(\mathbf{L}_1,\epsilon_1)\rightarrow(\mathbf{L}_2,\epsilon_2)$ is the following data: a pair $(M,\eta)$ of an isomorphism class of principal $U(1)$-bundle $M\rightarrow X_{0}$, an isomorphism of $U(1)$-bundles $t^{*}M\otimes L_1\otimes s^{*}M^{-1}\rightarrow L_2$, and a continuous function $\eta:X_{0}\rightarrow C_2$ such that $\epsilon_2=t^{*}\eta\epsilon_2 s^{*}\eta^{-1}$.
\end{defn}

\begin{exmp}\label{torustwist} Let $T=U(1)^{\times r}$ be a torus. Consider the quotient groupoid $T/T=(T\times T\rightrightarrows T$ associated to the (trivial) conjugation action of $T$ on itself. Let $\tau$ be a homomorphism $\tau:\pi_1T\rightarrow \Lambda:=\text{Hom}_\text{Grp}(T,U(1))$. Define a $\pi_1T$-action on $\mathfrak{t}\times T\times U(1)$ by the formula $p\cdot(X,t,e^{i\theta})=(X+p,t,e^{i\theta}\tau(p)(t))$ and let $L^\tau$ be the quotient $(\mathfrak{t}\times T\times U(1))/\pi_1T$. Then $L^\tau\rightrightarrows T$ is a groupoid with source and target map both given by $([X,t,e^\theta)]\mapsto e^X$, and the evident morphism to $T\times T\rightrightarrows T$ is a graded central extension once we equip both groupoids with the trivial grading.
\end{exmp}

\begin{rem} The category of graded central extensions is extremely sensitive to the groupoid presentation $\mathbf{X}$ of the underlying stack. For example, suppose that the underlying stack is equivalent to a finite $CW$-complex $X$. In the presentation of $X$ as $X\rightrightarrows X$, all objects of $\mathfrak{Ext}_{\mathbf{X}}$ are trivial as principal $U(1)$-bundles (the identity morphisms provide a section) and are therefore determined by their grading, so that $\pi_{0}\mathfrak{Ext}_{\mathbf{X}}=H^{0}(X;C_2)$. On the other hand, if $X$ is presented as the groupoid $\coprod U_{ij}\rightrightarrows \coprod U_{i}$ associated to a good Cech cover of $X$, then any Cech $1$-cocycle with values in line bundles defines a central extension. It follows that $\pi_{0}\mathfrak{Ext}_{\mathbf{X}}=H^{1}(X;C_2)\times H^{1}(X;BU(1))=H^{1}(X;C_2)\times H^{3}(X)$. This is in accordance with the fact that there is a general homotopy theoretic framework for twisted cohomology theories which foresees the possibility of twisting $K^{*}(X)$ by classes in $H^3(X)$.
\end{rem}

\begin{rem}\label{deg3twist}
 The appearance of $H^3$ in the above is not a coincidence - for all groupoids $\mathbf{X}$ considered in this paper, the set of isomorphism classes of graded central extension whose grading is trivial is isomorphic to the kernel of $H^3\mathbf{X}\rightarrow H^3X_0$ (c.f. [FHT1] Proposition 2.13). 
\end{rem}

\begin{defn}
In light of the above, define the category $\mathfrak{Twist}_{\mathbf{X}}$ as follows. Its objects are \textit{twists}: a pair $(\mathbf{V},\mathbf{L}^{\tau})$ consisting of a local equivalence (c.f. Definition \ref{localequiv}) $\mathbf{V}\rightarrow\mathbf{X}$ and a graded central extension $\mathbf{L}^{\tau}$ of $\mathbf{V}$. A morphism of twists from $(\mathbf{V},\mathbf{L}^{\tau})$ to $(\mathbf{U},\mathbf{L}^{\sigma})$ is a local equivalence $\phi:\mathbf{V}\rightarrow\mathbf{U}$ over $\mathbf{X}$ and a morphism in $\mathfrak{Ext}_{\mathbf{V}}$ from $\mathbf{L}^{\tau}$ to $\phi^{*}\mathbf{L}^{\sigma}$. A map of groupoids $f:\mathbf{X}\rightarrow\mathbf{Y}$ induces a pullback functor $f^*:\mathfrak{Twist}_{\mathbf{Y}}\rightarrow \mathfrak{Twist}_{\mathbf{X}}$.
\end{defn}
\begin{defn}

If $\mathbf{X}$ is a $B\mathbb{Z}$-groupoid, then the category $B\mathbb{Z}\text{-}\mathfrak{Twist}_{\mathbf{X}}$ of \textit{equivariant} twists is defined to be the category $\mathfrak{Twist}_{\mathbf{X}/B\mathbb{Z}}$. Define the category $B\mathbb{Z}\text{-}\mathfrak{Twist}$ whose objects are pairs consisting of a $B\mathbb{Z}$-groupoid $\mathbf{X}$ and an equivariant twist $(\mathbf{V},\mathbf{L}^{\tau})\in B\mathbb{Z}\text{-}\mathfrak{Twist}_{\mathbf{X}}$. A morphism from 
$(\mathbf{X},(\mathbf{V},\mathbf{L}^{\tau}))$ to $(\mathbf{Y},(\mathbf{U},\mathbf{L}^{\sigma}))$ is a $B\mathbb{Z}$-equivariant map $\varphi:\mathbf{X}\rightarrow\mathbf{Y}$ and a morphism from $(\mathbf{V},\mathbf{L}^{\tau})$ to $(\varphi^{*}\mathbf{U},\varphi^{*}\mathbf{L}^{\sigma})$ in $B\mathbb{Z}\text{-}\mathfrak{Twist}_{\mathbf{X}}$. A \textit{homotopy} between two such morphisms is defined to be a $B\mathbb{Z}$-equivariant map $\mathbf{X}\times [0,1]\rightarrow\mathbf{Y}$ restricting to each of the $B\mathbb{Z}$-equivariant maps $\mathbf{X}\rightarrow\mathbf{Y}$ at the endpoints.
\end{defn}

\begin{defn}
Let $B\mathbb{Z}\text{-}\mathfrak{Twist}_\text{rel}$ be the category whose objects are triples $(\mathbf{X},\mathbf{A},(\mathbf{V},\mathbf{L}^{\tau}))$ consisting of a $B\mathbb{Z}$-groupoid $\mathbf{X}$, a full $B\mathbb{Z}$-subgroupoid $\mathbf{A}$, and an object $(\mathbf{V},\mathbf{L}^{\tau})\in B\mathbb{Z}\text{-}\mathfrak{Twist}_{\mathbf{X}}$. Morphisms are the relative versions of those in $B\mathbb{Z}\text{-}\mathfrak{Twist}$. The relative twisted $\vec{K}_{S^1}$-theory spectrum $^{\tau}\vec{\mathcal{K}}_{S^1}(\textbf{X},\mathbf{A})$ is defined to be the homotopy fiber of the map induced by the inclusion $\mathbf{A}\rightarrow \mathbf{X}$, and its negatively indexed homotopy groups define the relative twisted $\vec{K}_{S^1}$-theory groups $^{\tau}\vec{K}^{*}_{S^1}(\textbf{X},\mathbf{A})$ associated to the object $(\mathbf{X},\mathbf{A},(\mathbf{V},\mathbf{L}^{\tau}))$.

\end{defn}

\subsection{The cohomology theory}

In this subsection a certain $\Omega$-spectrum is constructed for each object $(\mathbf{X},(\mathbf{V},\mathbf{L}^{\tau}))$ of $B\mathbb{Z}\text{-}\mathfrak{Twist}$. Let $G_{x}$ and $\tilde{G}_{x}$ denote the automorphism groups of an object $x$ in $\textbf{X}$ and $\mathbf{X}/B\mathbb{Z}$. By Definition \ref{BZquotient} there is an exact sequence 
$$1\rightarrow G_{x}\hookrightarrow\tilde{G}_{x}=(G_{x}\times\mathbb{R})/\mathbb{Z}\rightarrow S^1\rightarrow1.$$
where $\mathbb{Z}$ sits inside $G_x\times\mathbb{R}$ as the subgroup $\{(\alpha(x)^{-n},n)\}$. Because $G_{x}$ is compact the sequence admits a fractional right-splitting, i.e. a splitting after replacing $S^1$ by a finite cover $S^1_d\rightarrow S^1$. 
The construction of the $\vec{K}_{S^1}$-theory spectrum will require a choice of fractional splitting $\psi_x:  \tilde{G}_{x}\leftarrow S^1_d$ but the final product will not depend on it. Note that the space of choices of $\psi_x$ is a torsor for the group $\text{Hom}(S^{1},G_{x})^\text{conj}$ of conjugacy classes of homomorphisms $S^1\rightarrow G_x$, and that a global choice may not exist over all of $\mathbf{X}/B\mathbb{Z}$.
An example of this situation is the $B\mathbb{Z}$-action on $\mathbf{X}=\mathcal{L}(\text{pt}/U(1))\simeq U(1)/U(1)$ (c.f. Example \ref{loopgroupoid}). Then $\mathbf{X}/B\mathbb{Z}$ is equivalent to a $U(1)$-gerbe over $U(1)\times (\text{pt}/B\mathbb{Z})$ whose Dixmier-Douady invariant is a generator of $H^{3}(U(1)\times BS^1)$. Therefore a global fractional splitting---which is equivalent to a trivialization of the gerbe classified by some (nonzero) multiple of the generator---does not exist.\footnote{The appendix contains a treatment of this example in greater detail.}

Ordinary $K$-theory is concerned with equivalence classes of finite-dimensional vector bundles. Morally, local $\mathbb{Z}((q))$-completion corresponds to relaxing this condition to allow for infinite-dimensional bundles, provided that the image of $S^1$ under $\psi_x$ acts on the fiber with finite-dimensional isotypic subspaces and that the set of irreducible characters of $S^{1}$ that appear is bounded below as a subset of $\mathbb{Z}$. The failure of $\psi_x$ to extend over $X_{0}$ makes this ill-defined: the same bundle can meet these requirements for one choice of $\psi_x$ and fail them for another. Luckily the naive fix---restricting to those bundles whose fibers satisfy the above property for all choices of $\psi_x$---turns out to work nicely.

I work in the Freed-Hopkins-Teleman model of twisted $K$-theory for local quotient groupoids. The representing spectrum will be constructed from spaces of sections of bundles of Fredholm operators. Let $Cl_{n}$ be the Clifford algebra on $\mathbb{C}^{n}$, generated by the standard basis elements $e_{1},...,e_{n}$. For a Hilbert space $H$ let $\mathcal{B}(H)$ denote the spaces of bounded operators with the compact-open topology and let $\mathcal{K}(H)$ denote the spaces of compact operators with the norm topology. For odd $n$ and an operator $A\in\mathcal{B}(Cl_{n}\otimes H)$ define
$$w(A):=
\begin{cases}
e_{1}...e_{n}A\ \ \ \ n\equiv-1\ \text{mod}\ 4\\
-ie_{1}...e_{n}A\ \ \ \ n\equiv 1\ \text{mod}\ 4. 
\end{cases}$$
 Let $\text{Fred}^{n}(H)$ be the set of odd skew-adjoint Fredholm operators $A\in\mathcal{B}(Cl_{n}\otimes H)$ such that $A$ commutes with $Cl_{n}$, $A^2+1$ is a compact operator, and, when $n$ is odd, the operator $w(A)$ has both positive and negative eigenvalues. This set is topologized via the inclusion
 $$\text{Fred}^{n}(H)\hookrightarrow\mathcal{B}(H)\times\mathcal{K}(H)$$
 $$A\mapsto (A, A^2+1).$$
 
\begin{defn}
Let $(\mathbf{X},(\mathbf{V},\mathbf{L}^{\tau}))$ be an object of $B\mathbb{Z}\text{-}\mathfrak{Twist}$. Recall that there is an implicit grading $\epsilon: \mathbf{L}^{\tau}\rightarrow\text{pt}/C_2$. A \textit{$\tau$-twisted Hilbert bundle} over $\mathbf{X}$ is a Hilbert bundle $\mathcal{H}\rightarrow\mathbf{L}^{\tau}$ such that for any object $x$ in $\mathbf{L}^{\tau}$ the central $U(1)\in\text{Aut}(x)$ acts on the fiber $\mathcal{H}_x$ by scalar multiplication, and for any morphism $f:x\to y$  the map of fibers $\mathcal{H}_x\rightarrow\mathcal{H}_y$ has degree $\epsilon(f)$.
\end{defn}
 
\begin{defn}
Let $(\mathbf{X},(\mathbf{V},\mathbf{L}^{\tau}))$ be an object of $B\mathbb{Z}\text{-}\mathfrak{Twist}$. A $\tau$-twisted Hilbert bundle $\mathcal{H}\rightarrow \textbf{X}$ over a groupoid over $\textbf{X}$ is said to be \textit{locally universal} if for every open subgroupoid $U\hookrightarrow \textbf{X}$ and every $\tau$-twisted Hilbert bundle $\mathcal{V}$ over $U$, there exists a unitary embedding of  $\mathcal{V}$ into the restriction $\mathcal{H}_{U}$ of $\mathcal{H}$ to $U$. If $\mathbf{X}$ is a local quotient groupoid (c.f. Definition \ref{localquotient}) then a locally universal Hilbert bundle always exists and is unique up to unitary equivalence (c.f. [FHT1] Lemma 3.12).
\end{defn}

\begin{defn} Let $(\mathbf{X},(\mathbf{V},\mathbf{L}^{\tau}))$ be an object of $B\mathbb{Z}\text{-}\mathfrak{Twist}$ and let $\mathcal{H}\rightarrow\mathbf{L}^{\tau}$ be a locally universal $\tau$-twisted Hilbert bundle. Then the (uncompleted) $S^{1}$-equivariant, $\tau$-twisted $K$-theory of $\textbf{X}$ is defined in [FHT1] (Section 3.4, A.5) to be
$$^\tau K_{S^{1}}^{n}(\textbf{X}):=\ ^\tau K^{n}(\mathbf{X}/B\mathbb{Z}):=\pi_{k}\Gamma(\mathbf{L}^{\tau}; \text{Fred}^{n+k}(\mathcal{H})).$$
 It is independent of the choice of $\mathcal{H}$ (c.f. [FHT1] Remark 3.17).
 \end{defn}
 
Over an object $x$ the fiber $\mathcal{H}_{x}$ is isomorphic to $L^2(\tilde{G}_{x})\otimes\ell^2\otimes Cl_{1}$ (c.f. [FHT1] Lemma A.32). A choice of fractional splitting $\psi_x:\tilde{G}_{x}\leftarrow S^1_d$ gives an isomorphism $\mathcal{H}_{x}\simeq L^2(G_{x})\otimes L^2(S^1)\otimes\ell^2\otimes Cl_{1}$. An irreducible representation of $S^1$ is labelled by an integer $k\in\mathbb{Z}$ and has character $q\mapsto q^k$. For $F\in\mathcal{B}(\mathcal{H}_{x})^{S^1}$, let $F_{\psi_x}(k)$ denote the restriction of $F$ to the $S^1$-isotypic component of weight $k$, after splitting $\mathcal{H}_{x}\simeq V_{\psi_x}\otimes L^2(S^1)$ using $\psi_x$ as above. 
 
\begin{defn}\label{qfred}
Define the set of $\psi_x$-\textit{relative $q$-Fredholm operators} as
$$q_{\psi_x}\text{Fred}^{n}(\mathcal{H}_{x}):=\{F\in\mathcal{B}(\mathcal{H}_{x})^{S^1}| F_{\psi_x}(k)\in\text{Fred}^{n}(V_{\psi_x})^{G_{x}}\ \text{invertible for}\ k<<0\}.$$
Note that in particular each $F_{\psi_x}$ commutes with the $G_x$-action. As remarked earlier, this set depends on $\psi_x$. Define the set of \textit{$q$-Fredholm operators} $q\text{Fred}^{n}(\mathcal{H}_{x})$ to be
the intersection of  $q_{\psi_x}\text{Fred}^{n}(\mathcal{H}_{x})$
for all choices of $\psi_x$ (recall that the set of such choices is a torsor for $\text{Hom}(S^1,G_x)^\text{conj}$). Finally, topologize this set via the inclusion
\begin{align*}
   q\text{Fred}^{n}(\mathcal{H}_{x})& \hookrightarrow\prod_{\gamma\in\text{Hom}(S^1,G_{x})}\prod_{k\in\mathbb{Z}}\text{Fred}^{n}(V_{\gamma\cdot\psi_x})\\
F&\mapsto \prod\prod F_{\gamma\cdot\psi_x}(k).
\end{align*}
Although the inclusion depends on $\psi_x$, the induced topology does not, because for any other splitting $\psi'_x$, $V_{\psi'_x}$ is unitarily equivalent to $V_{\psi_x}$, so changing the splitting amounts to shifting the $\gamma$ index in the product.
\end{defn}

\begin{rem}
This definition is capturing the more humanly comprehensible idea that a $q$Fredholm operator on $\mathcal{H}_x$ is an operator which, under \textit{every} decomposition $\mathcal{H}_x\simeq V_{\psi_x}\otimes L^2S^1$, `looks like' a Laurent series (in the monomials $q^k$ that label $S^1$-representations) of Fredholm operators on $V_ {\psi_x}$.
\end{rem}

\begin{exmp}\label{rank1example} (c.f. Examples \ref{loopgroupoid} and \ref{torustwist})
Let $\mathbf{X}$ be the action groupoid of the trivial action of $U(1)$ on itself $U(1)/U(1)\simeq \mathcal{L}(\text{pt}/U(1))$ which is a $B\mathbb{Z}$-groupoid with $B\mathbb{Z}$-action given by the automorphism whose component at an object $t$ is the morphism determined by $t$. It admits a $B\mathbb{Z}$-equivariant twist for every $\tau\in\mathbb{Z}\simeq H^3_{S^1}(U(1)/U(1))$. For each such $\tau$ the twist can be presented explicitly as the pair $(\mathbf{V},\mathbf{L}^\tau)$ where $\mathbf{V}\simeq\mathbf{X}/B\mathbb{Z}$ and $\mathbf{L}^\tau$ is described as follows. 
Define an action of $\pi_1U(1)=\mathbb{Z}$ on $i\mathbb{R}\times U(1)\times U(1)\times\mathbb{R}$ by the formula (c.f. Definition \ref{explicitmodelTtau})
$$p\cdot (iX,e^{2\pi i\theta},e^{2\pi i\phi},r)=(i(X+p),e^{2\pi i(\theta-p\tau X+\tau p^2r)},e^{2\pi i(\phi+pr)},r).$$
There is a commuting $\mathbb{Z}$-action (c.f. Definition \ref{BZquotient}) defined by the formula 
$n\cdot(iX,e^{2\pi i\theta},e^{2\pi i\phi},r)=(iX,e^{2\pi i\theta},e^{2\pi i(\phi-nX)},r+n)$
and write $L^\tau$ for the quotient by $\pi_1T\times\mathbb{Z}$. Define the groupoid $\mathbf{L}^\tau=(L^\tau\rightrightarrows U(1))$ by declaring the source and target map to both be $[(X,\theta,t,r)]\mapsto e^{2\pi iX}$. The set of fractional splittings at an object $t$ is isomorphic to the set of $iX$ such that $e^{2\pi i X}=t$ (which is a torsor for $\text{Hom}(S^1,U(1))\simeq\mathbb{Z}$) since each such lift induces an identification of the automorphism group of $t$ with $U(1)\times U(1)\times S^1$ by translating with the $\pi_1U(1)$-action until $iX$ is in the fundamental domain containing 0.  Fixing such a fractional splitting $\psi_t$, a $q_{\psi_t}$Fredholm operator $F$ is represented by a Laurent series $\sum_kF(k)q^k$ where $F(k)$ is a $U(1)$-invariant Fredholm operator on $L^2U(1)$. In other words, $F$ can be represented by a Laurent series $\sum\chi_kq^k\in R(U(1))((q))\simeq\mathbb{Z}[t^\pm]((q)))$. Write $R(U(1))\simeq\mathbb{Z}[t\pm]$. From the explicit presentation of $\mathbf{L}^\tau$ in Example \ref{torustwist}, changing the splitting $\psi_t$ by the generator $1\in\text{Hom}(S^1,U(1))=\pi_1U(1)$ sends $\sum\chi_k(t)q^k$ to $(qt)^\tau\sum\chi_k(qt)q^k$. In order for the latter to be a Laurent series, it must be that $\chi_k(t)\sim O(t^k)$ as $k\to\infty$.
\end{exmp}

Letting $x$ vary over the space of objects of $\mathbf{L}^{\tau}$ in the above construction defines, for each $n$, a subspace $q\text{Fred}^{n}(\mathcal{H})$ of the bundle $\mathcal{B}(\mathcal{H})\times\mathcal{K}(\mathcal{H})\rightarrow\mathbf{L}^{\tau}$. In general $ q\text{Fred}^{n}(\mathcal{H})\rightarrow \mathbf{L}^{\tau}$ may not be a bundle or even a fibration. Nevertheless, it is a continuous surjection and therefore has an associated sheaf of continuous sections, denoted by $q\mathcal{F}^{n}(\mathcal{H})$. The following is an easy consequence of the methods developed in the Appendix of [FHT1].

\begin{lem} There is a weak homotopy equivalence $\Gamma(\mathbf{L}^{\tau}; q\mathcal{F}^{n+1}(\mathcal{H}))\xrightarrow{\sim} \Omega\Gamma(\mathbf{L}^{\tau}; q\mathcal{F}^{n}(\mathcal{H}))$.
\end{lem}

Because $Cl_{2}\simeq\text{M}_{2}(\mathbb{C})$ is a matrix algebra there is a homeomorphism 
$$\Gamma(\mathbf{L}^{\tau}; q\mathcal{F}^{n+2}(\mathcal{H}))\rightarrow\Gamma(\mathbf{L}^{\tau}; q\mathcal{F}^{n}(\mathcal{H})).$$

\begin{defn}\label{Khatdef}
 Let $(\mathbf{X},(\mathbf{V},\mathbf{L}^{\tau}))$ be an object of $B\mathbb{Z}\text{-}\mathfrak{Twist}$. Define the $\tau$-twisted $\vec{K}_{S^1}$-theory $\Omega$-spectrum of $\textbf{X}$ as:

\[ \vec{\mathcal{K}}_{S^1}(\textbf{X})_{n}:=\begin{cases} 
      \Gamma(\mathbf{L}^{\tau}; q\mathcal{F}^{0}(\mathcal{H})) & n\ \text{even}\\
      \Gamma(\mathbf{L}^{\tau}; q\mathcal{F}^{1}(\mathcal{H})) & n\ \text{odd}
      \end{cases}
\]
with the structure maps $\vec{\mathcal{K}}_{S^1}(\textbf{X})_{n}\rightarrow\Omega\vec{\mathcal{K}}_{S^1}(\textbf{X})_{n+1}$ given by a combination of the previous two maps, as needed for the parity of $n$. Suppose $\mathbf{A}\subset\mathbf{X}$ is a full $B\mathbb{Z}$-subgroupoid (c.f. Definition \ref{BZdef}). The pullback of $(\mathbf{V},\mathbf{L}^{\tau})$ to $\mathbf{A}$ defines a ($B\mathbb{Z}$-equivariant) twist $(\mathbf{V}_\mathbf{A},\mathbf{L}^{\tau}_\mathbf{A})$ of $\mathbf{A}$, and the pullback of $\mathcal{H}$ to $\mathbf{L}^{\tau}_\mathbf{A}$ is $\tau$-twisted and locally universal(c.f. [FHT1] Corollary A.34). Therefore the restriction map $\iota_{\mathbf{A}}^{n}:\Gamma(\mathbf{L}^{\tau}; q\mathcal{F}^{n}(\mathcal{H}))\rightarrow \Gamma(\mathbf{L}^{\tau}_\mathbf{A}/B\mathbb{Z}; q\mathcal{F}^{n}(\mathcal{H}))$ is well-defined. The relative $\vec{K}_{S^1}$-theory $\Omega$-spectrum $\vec{\mathcal{K}}_{S^1}(\textbf{X},\mathbf{A})$ is defined to be its homotopy fiber. Finally, define

$$\vec{K}_{S^1}^{*}(\textbf{X},\mathbf{A}):=\pi_{-*}\vec{\mathcal{K}}_{S^1}(\textbf{X},\mathbf{A}).$$
\end{defn}

Of course, at this point it is not clear how much the definition depends on the choice of twist $(\mathbf{V},\mathbf{L}^\tau)$, but the following lemma settles the question: the homotopy type of the spectrum depends only on the isomorphism class of the twist, which is an element in $H^1(\mathbf{X};C_2)\oplus H^3\mathbf{X}$.

\begin{prop}For $n\in\mathbb{Z}$, the collection of assignments 
$$(\mathbf{X},\mathbf{A},(\mathbf{V},\mathbf{L}^{\tau}))\mapsto\ ^{\tau}\vec{K}_{S^1}^{n}(\mathbf{X},\mathbf{A})$$ forms a twisted cohomology theory. More precisely,
\begin{enumerate}[itemsep=0pt]
\item[i)] this defines a contravariant functor from $B\mathbb{Z}\text{-}\mathfrak{Twist}_\text{rel}$ to $\mathbb{Z}((q))\text{-mod}$ taking local equivalences to isomorphisms and taking homotopic\footnote{Homotopy is defined by the standard interval object $[0,1]$.} morphisms to equal ones;

\item[ii)] there is a natural long exact sequence
$$...\rightarrow\ ^{\tau}\vec{K}_{S^1}^{n}(\mathbf{X},\mathbf{A})\rightarrow\ ^{\tau}\vec{K}_{S^1}^{n}(\mathbf{X})\rightarrow\ ^{\tau}\vec{K}_{S^1}^{n}(\mathbf{A})\rightarrow\ ^{\tau}\vec{K}_{S^1}^{n+1}(\mathbf{X},\mathbf{A})\rightarrow...$$

\item[iii)] (excision) if $\mathbf{Z}\subset\mathbf{A}$ is a full $B\mathbb{Z}$-subgroupoid whose closure is contained in the interior of $\mathbf{A}$, then the restriction map 
$$^{\tau}\vec{K}_{S^1}^{n}(\mathbf{X},\mathbf{A})\longrightarrow\ ^{\tau}\vec{K}_{S^1}^{n}(\mathbf{X}\setminus\mathbf{Z},\mathbf{A}\setminus\mathbf{Z})$$
is an isomorphism;

\item[iv)] if $J$ is an index set and $(\mathbf{X},\mathbf{A},(\mathbf{V}, \pmb{\tau}))=\coprod_{J}(\mathbf{X}_{j},\mathbf{A}_{j},(\mathbf{V}_{j}, \mathbf{L}^{\tau_{j}}))$ is a disjoint union, then 
$$^{\tau}\vec{K}_{S^1}^{n}(\mathbf{X},\mathbf{A})\longrightarrow\ \prod_{J}\ ^{\tau_{j}}\vec{K}_{S^1}^{n}(\mathbf{X}_{j},\mathbf{A}_{j})$$
is an isomorphism.
\end{enumerate}
\end{prop}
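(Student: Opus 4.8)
The plan is to reduce every clause of the proposition to a statement about (pairs of) local quotient groupoids carrying a locally universal Hilbert bundle, and then to transcribe the corresponding argument of [FHTI], Section 3, checking only that the one new feature of the present construction survives each operation. Concretely, an equivariant assertion about an $S^1$-groupoid $\mathbf{X}$ with equivariant twist $\pmb{\tau}$ is by definition the same assertion about the plain local quotient groupoid $\mathbf{X}:S^1$ with the graded central extension $\pmb{\tau}\to\mathbf{X}:S^1$, and the triple $(\mathbf{X},\mathbf{A},\pmb{\tau})$ becomes the pair of $S^1$-groupoids $(\pmb{\tau},P^{-1}\mathbf{A})$; so the whole statement concerns the functor $\mathbf{Y}\mapsto\Gamma(\mathbf{Y};q\mathcal{F}^{\bullet}(\mathcal{H}))$. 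The only thing not already present in [FHTI] is the convergence condition $(\star)$ together with the passage from $\text{Fred}^{n}$ to $q\text{Fred}^{n}$, and I would verify once and for all that this is harmless: $(\star)$ is imposed fiberwise over objects and $q\text{Fred}^{n}(\mathcal{H}_{x})$ carries the subspace topology inside a product of copies of $\text{Fred}^{n}$, so restriction of sections, pullback of sections along equivariant functors, and local gluing of sections all take $q$-Fredholm sections to $q$-Fredholm sections and preserve $(\star)$. Granting this, the sheaf $q\mathcal{F}^{n}(\mathcal{H})$ on the site of open $S^1$-subgroupoids inherits the formal properties of the FHT sheaf $\mathcal{F}^{n}(\mathcal{H})$ — softness, homotopy invariance, and homotopy descent along open covers — and the rest of the proof is bookkeeping.

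With that in hand, (i), (ii) and (iv) are essentially formal. For (i): a morphism $(F,\phi)$ in $\mathfrak{Ext}^{S^1}\text{-trip}$ supplies an equivariant functor $\pmb{\tau}\to F^{*}\pmb{\sigma}$ covering $F$; along such a functor a locally universal bundle pulls back to a locally universal one because $\mathbf{X}$ is a local quotient, so composing with restriction yields a map of section spaces and hence of the groups $^{\tau}\hat{K}_{S^1}^{n}$. Independence of the chosen representative follows because a natural transformation $\eta:F\Rightarrow G$ induces a homotopy between the two pullback maps on section spaces (a natural transformation of groupoid functors being a functor out of $\mathbf{X}$ times the interval groupoid), and the triangle imposed on $\eta$ makes this homotopy compatible with the twist. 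Homotopy invariance reduces to the two endpoint restrictions $\Gamma(\mathbf{Y}\times[0,1];q\mathcal{F}^{n})\to\Gamma(\mathbf{Y};q\mathcal{F}^{n})$ being homotopic, which holds since $[0,1]$ is contractible and the section functor of $q\mathcal{F}^{n}$ is homotopy invariant (the $q\text{Fred}$ fibers, like $\text{Fred}^{n}$, representing a homotopy-invariant functor); and that local equivalences go to isomorphisms is Morita invariance of the FHT section model, which is unchanged here since local universality is preserved under restriction and pullback along local equivalences. For (ii): the relative spectrum in degree $n$ is the homotopy fibre of $\iota^{n}_{P^{-1}\mathbf{A}}:\hat{\mathcal{K}}_{S^1}(\pmb{\tau})_{n}\to\hat{\mathcal{K}}_{S^1}(P^{-1}\mathbf{A})_{n}$, so its long exact homotopy sequence, rewritten by means of the $\Omega$-spectrum identifications $\pi_{1}\hat{\mathcal{K}}_{S^1}(\mathbf{Y})_{n}\cong\pi_{0}\hat{\mathcal{K}}_{S^1}(\mathbf{Y})_{n-1}$ of the previous subsection, is exactly the stated sequence, naturality being inherited from functoriality of the homotopy fibre. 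For (iv): a continuous section over a disjoint union is a tuple of continuous sections, so $\Gamma(\coprod_{J}\mathbf{Y}_{j};q\mathcal{F}^{n})=\prod_{J}\Gamma(\mathbf{Y}_{j};q\mathcal{F}^{n})$ on the nose, and passing to homotopy fibres and then to $\pi_{0}$ (which commutes with arbitrary products of spaces) gives the isomorphism, the twist on the disjoint union being $\coprod_{J}\pmb{\tau}_{j}$.

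The part with genuine content is excision (iii). After absorbing the twist it suffices to treat the untwisted statement for a pair $(\mathbf{Y},\mathbf{B})$ of local quotient groupoids and a full subgroupoid $\mathbf{C}\subset\mathbf{B}$ with $\overline{\mathbf{C}}\subset\text{int}\,\mathbf{B}$: I must show that restriction identifies the homotopy fibre of $\Gamma(\mathbf{Y};q\mathcal{F}^{n})\to\Gamma(\mathbf{B};q\mathcal{F}^{n})$ with that of $\Gamma(\mathbf{Y}\setminus\mathbf{C};q\mathcal{F}^{n})\to\Gamma(\mathbf{B}\setminus\mathbf{C};q\mathcal{F}^{n})$. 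Since $\overline{\mathbf{C}}\subset\text{int}\,\mathbf{B}$, the full subgroupoids $\mathbf{Y}\setminus\overline{\mathbf{C}}$ and $\text{int}\,\mathbf{B}$ form an open cover of $\mathbf{Y}$ with intersection $\text{int}\,\mathbf{B}\setminus\overline{\mathbf{C}}$, and an elementary retraction inside the collar $\mathbf{B}\setminus\mathbf{C}$ reduces the claim to the assertion that the square
\[
  \begin{tikzcd}
    \Gamma(\mathbf{Y};q\mathcal{F}^{n})\arrow{r}\arrow{d} & \Gamma(\mathbf{B};q\mathcal{F}^{n})\arrow{d}\\
    \Gamma(\mathbf{Y}\setminus\mathbf{C};q\mathcal{F}^{n})\arrow{r} & \Gamma(\mathbf{B}\setminus\mathbf{C};q\mathcal{F}^{n})
  \end{tikzcd}
\]
is homotopy cartesian, i.e.\ to homotopy descent for $q\mathcal{F}^{n}$ along this open cover. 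The proof of descent is the standard one: given a section on $\mathbf{Y}\setminus\mathbf{C}$, a section on $\text{int}\,\mathbf{B}$, and a homotopy between their restrictions to the overlap, one glues them to a section on $\mathbf{Y}$ using a partition of unity on the coarse moduli space subordinate to a refinement of the cover — and it is precisely here, exactly as in [FHTI], that the standing hypotheses (that $\mathbf{X}$ is a local quotient and that $G$ acts with locally contractible slices) enter. The one thing to add beyond [FHTI] is that the glued operator again lies in $q\text{Fred}^{n}$ and satisfies $(\star)$, which is immediate because both are per-object conditions and near each object the glued operator agrees with one of the given ones. The hard part will therefore not be the idea of this gluing, which is the FHT one, but the bookkeeping needed to make it continuous in families and compatible with the $S^1$-action and the twisting; I would carry it out by following the corresponding arguments of [FHTI] with the convergence condition carried along passively throughout.
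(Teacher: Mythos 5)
Your treatment of (i), (ii) and (iv) matches the paper's essentially word for word: functoriality and homotopy invariance via pullback-preservation of local universality, local equivalences via descent, the long exact sequence via the fiber sequence of the homotopy fiber, and disjoint unions on the nose. The divergence, and the place to be careful, is excision.

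For (iii) the paper does \emph{not} argue by Mayer--Vietoris descent. It forms the double mapping cylinder $\mathbf{M}$ of $\mathbf{Y}\hookleftarrow\mathbf{W}\hookrightarrow\mathbf{A}$ (and the single mapping cylinder $\mathbf{N}$ of $\mathbf{Y}\hookleftarrow\mathbf{W}$), observes that $\mathbf{M}\to\mathbf{X}$ is a homotopy equivalence, uses the fact that the relative inclusions $\mathbf{A}\hookrightarrow\mathbf{M}$ and $\mathbf{W}\hookrightarrow\mathbf{N}$ are cofibrations so the restriction maps on section spaces are fibrations and the homotopy fibers become actual fibers $\Gamma(\mathbf{M},\mathbf{A};q\mathcal{F}^{n})$, $\Gamma(\mathbf{N},\mathbf{W};q\mathcal{F}^{n})$, and then finishes by noting that $(\mathbf{N},\mathbf{W})\hookrightarrow(\mathbf{M},\mathbf{A})$ induces a \emph{homeomorphism} of these relative section spaces. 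In other words the whole point of the mapping cylinder maneuver is to avoid ever having to glue two sections together: the final step is a homeomorphism, not a descent statement.

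Your route instead asserts that the square of restriction maps is homotopy cartesian and proposes to prove this by partition-of-unity gluing of sections, ``exactly as in [FHTI].'' This is where the gap lies. The fibers of $q\mathrm{Fred}^{n}(\mathcal{H})\to\mathbf{X}:S^{1}$ are spaces of ($q$-)Fredholm operators, not a vector bundle, so there is no affine structure with which to form the partition-of-unity combination $\rho_{1}s_{1}+\rho_{2}s_{2}$; a convex combination of Fredholm operators need not be Fredholm, and the paper explicitly warns that $q\mathrm{Fred}^{n}(\mathcal{H})\to\mathbf{X}:S^{1}$ is not even a fibration, so the standard homotopy-descent argument for fibrations does not apply either. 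Whatever substitute one uses (a deformation retraction of section spaces onto constants in slices, or the ``topological sheaf'' formalism of [FHTI]) is precisely the nontrivial content, and attributing it to [FHTI] as a straightforward gluing is inaccurate: [FHTI]'s excision proof, which the paper follows, is the cofibration/mapping-cylinder argument sketched above. Your descent claim is plausibly \emph{true} — a statement in that direction is Lemma 5.3.1 of the paper's Appendix, which extends Proposition A.19 of [FHTI] from fiber bundles to arbitrary continuous surjections — but you would need to reduce your homotopy-cartesian square to that lemma (or prove a comparable homotopy-sheaf statement), not invoke a partition-of-unity gluing that is unavailable here.
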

\begin{proof} This is essentially the proof given in Section 3.5 of [FHT1] with minor changes. Functoriality is immediate from the construction of the spectrum. Write $I=[0,1]$ for the unit interval. Homotopy invariance follows from the fact that if $\mathcal{H}\rightarrow\mathbf{L}^{\tau}$ is a locally universal $\tau$-twisted Hilbert bundle and $p:\mathbf{L}^{\tau}\times I \rightarrow\mathbf{L}^{\tau}$ is the projection then $p^{*}\mathcal{H}\rightarrow\mathbf{L}^{\tau}\times I$ is $\tau$-twisted\footnote{To be pedantic, $p^*\tau$-twisted.} locally universal ([FHT1] Lemma A.32), so
$$^{\tau}\vec{\mathcal{K}}_{S^1}\big((\textbf{X},\mathbf{A})\times I\big)_{n}\simeq\vec{\mathcal{K}}_{S^1}(\textbf{X},\mathbf{A})_{n}^{I},$$
making the two restriction maps homotopic.

The fact that local equivalences are taken to isomorphisms is a consequence of \textit{descent} (c.f. [FHT1] Lemma A.18), which states that the pullback $f^*$ along a local equivalence $f:\mathbf{X}\rightarrow\mathbf{Y}$ induces an equivalence of categories, from groupoids over $\mathbf{Y}$ to groupoids over $\mathbf{X}$, with a natural adjoint inverse denoted $f_*$. Hence for any  $\mathbf{P}\rightarrow\mathbf{Y}$ the natural map from $\Gamma(\mathbf{Y},{P})\rightarrow\Gamma(\mathbf{X},f^*\mathbf{P})$ is a homeomorphism whose inverse is the composition of the natural map $\Gamma(\mathbf{X},f^*\mathbf{P})\rightarrow\Gamma(\mathbf{Y},f_*f^*\mathbf{P})$ with the map on sections induced by counit $f_*f^*\mathbf{P}\rightarrow \mathbf{P}$.

The long exact sequence in $\textit{i)}$ is obtained from the fiber sequence
$$...\rightarrow\Omega^{\tau}\vec{\mathcal{K}}_{S^1}(\mathbf{A})_{n}\rightarrow\ ^{\tau}\vec{\mathcal{K}}_{S^1}(\textbf{X},\mathbf{A})_{n}\rightarrow \ ^{\tau}\vec{\mathcal{K}}_{S^1}(\textbf{X})_{n}\rightarrow\ ^{\tau}\vec{\mathcal{K}}_{S^1}(\mathbf{A})_{n}\rightarrow...$$
The claim $\textit{iv)}$ about disjoint unions is immediate from the definition. It remains to prove excision. Despite some cumbersome notation, the proof use a few standard homotopy-theoretic constructions to boil things down to the following fact ([FHT1] Lemma A.32): the pullback of a $\tau$-twisted locally universal Hilbert bundle over a local quotient groupoid to a full subgroupoid is again locally universal. Amusingly, the use of this fact will make its appearance in a footnote.
Let 
$\mathbf{M}=\mathbf{X}\setminus\mathbf{Z}\cup I\times \big(\mathbf{A}\setminus\mathbf{Z}\big)\cup(\mathbf{A}\setminus\mathbf{Z}\big)\times I\cup\mathbf{A}$ 
be the double mapping cylinder of $\mathbf{X}\setminus\mathbf{Z}\hookleftarrow\mathbf{A}\setminus\mathbf{Z}\hookrightarrow\mathbf{A}$. The point-set topological conditions on $\mathbf{A}$ and $\mathbf{Z}$ imply that the collapse map $c:\mathbf{M}\rightarrow\mathbf{X}$ is an equivalence. 
Consider the following diagram, in which each row is a fiber sequence.
\[
  \begin{tikzcd}
  ^{\tau}\vec{\mathcal{K}}_{S^1}(\textbf{X},\mathbf{A})_{n} \arrow{d}  \arrow{r} &^{\tau}\vec{\mathcal{K}}_{S^1}(\textbf{X}) _{n}\arrow{r} \arrow{d} & ^{\tau}\vec{\mathcal{K}}_{S^1}(\mathbf{A})_{n}\arrow{d} \\
    ^{c^{*}\tau}\vec{\mathcal{K}}_{S^1}(\textbf{M},\mathbf{A})_{n}\arrow{r}&^{c^{*}\tau}\vec{\mathcal{K}}_{S^1}(\textbf{M})_{n} \arrow{r}&^{\tau}\vec{\mathcal{K}}_{S^1}(\mathbf{A})_{n}
  \end{tikzcd}
\]
The vertical arrow on the right is a homeomorphism and the vertical arrow in the middle is a homotopy equivalence, so the vertical arrow on the left is a weak homotopy equivalence. Let $\mathbf{N}$ be the mapping cylinder of $\mathbf{X}\setminus\mathbf{Z}\hookleftarrow\mathbf{A}\setminus\mathbf{Z}$ and $h:\mathbf{N}\rightarrow\mathbf{X}$ the obvious map. Since $(\mathbf{X}\setminus\mathbf{Z},\mathbf{A}\setminus\mathbf{Z})=(\mathbf{X}\setminus\mathbf{Z},\mathbf{A}\setminus\mathbf{Z})$ a similar argument shows that $h$ induces a weak equivalence 
$$^{\tau}\vec{\mathcal{K}}_{S^1}(\textbf{X}\setminus\mathbf{Z},\mathbf{A}\setminus\mathbf{Z})_{n}\longrightarrow\ ^{h^{*}\tau}\vec{\mathcal{K}}_{S^1}(\textbf{N},\mathbf{A}\setminus\mathbf{Z})_{n}.$$
Thus it suffices to show that the restriction map
$$r:\  ^{c^{*}\tau}\vec{\mathcal{K}}_{S^1}(\textbf{M},\mathbf{A})_{n}\longrightarrow\ ^{h^{*}\tau}\vec{\mathcal{K}}_{S^1}(\textbf{N},\mathbf{A}\setminus\mathbf{Z})_{n}$$
is a weak equivalence.\footnote{Note that the twists $h^{*}\tau$ and $r^{*}c^{*}\tau$ are canonically isomorphic.}

For a based topological sheaf $\mathcal{F}$ over a groupoid $\mathbf{V}$ with subgroupoid $\mathbf{A}\setminus\mathbf{Z}$ let $\Gamma(\mathbf{V},\mathbf{A}\setminus\mathbf{Z};\mathcal{F})$ denote the space of global sections whose restriction to $\mathbf{A}\setminus\mathbf{Z}$ is the basepoint. Recall that the relative $\vec{K}_{S^1}$-theory spectrum of a pair is by definition the homotopy fiber of a restriction map on the space of sections of a based topological sheaf. Since the relative inclusions $\mathbf{A}\hookrightarrow\mathbf{M}$ and $\mathbf{A}\setminus\mathbf{Z}\hookrightarrow\mathbf{N}$
are cofibrations\footnote{A cofibration of topological groupoids is defined in the same was as for topological spaces. Namely one asks for the homotopy extension property for maps to topological spaces.} the corresponding restriction maps are fibrations. Hence the maps\footnote{Since the pullback of the $\tau$-twisted locally universal Hilbert bundle over $\pmb{\tau}$ to any of the groupoids mentioned above is again locally universal, in a final abuse of notation I use $q\mathcal{F}^{n}$ to denote the sheaf and any of its pullbacks by maps in sight.}  

$$\Gamma(\textbf{M},\mathbf{A};q\mathcal{F}^{n})\longrightarrow\ ^{c^{*}\tau}\vec{\mathcal{K}}_{S^1}(\textbf{M},\mathbf{A})_{n}$$
$$\Gamma(\textbf{N},\mathbf{A}\setminus\mathbf{Z};q\mathcal{F}^{n})\longrightarrow\ ^{h^{*}\tau}\vec{\mathcal{K}}_{S^1}(\textbf{N},\mathbf{A}\setminus\mathbf{Z})_{n}$$
are inclusions of fibers into homotopy fibers and are thus homotopy equivalences.

Finally, the relative inclusion $(\mathbf{N},\mathbf{A}\setminus\mathbf{Z})\hookrightarrow(\mathbf{M},\mathbf{A})$ induces  induces a homeomorphism 
$$\Gamma(\textbf{M},\mathbf{A};q\mathcal{F}^{n})\longrightarrow\Gamma(\textbf{N},\mathbf{A}\setminus\mathbf{Z};q\mathcal{F}^{n}),$$
which---along with the previous two homotopy equivalences and the map of interest $\Xi$---fits into the following diagram. 
\[
  \begin{tikzcd}
 \Gamma(\textbf{M},\mathbf{A};q\mathcal{F}^{n}) \arrow{d}  \arrow{r} & \Gamma(\textbf{N},\mathbf{A}\setminus\mathbf{Z};q\mathcal{F}^{n})\arrow{d} \\
   ^{c^{*}\tau}\vec{\mathcal{K}}_{S^1}(\textbf{M},\mathbf{A})_{n} \arrow{r}{r}&\ ^{h^{*}\tau}\vec{\mathcal{K}}_{S^1}(\textbf{N},\mathbf{A}\setminus\mathbf{Z})_{n}
  \end{tikzcd}
\]
It follows that $r$ is a weak equivalence.

\end{proof}

\section{Calculations}

\begin{defn}\label{strongtopreg} An element $\tau\in H_{S^1}^{3}(G/G)$ is called a \textit{strongly topologically regular}\footnote{The adverb ``strongly" is there to distinguish this from the condition of topological regularity (c.f. [FHT3] 2.1), which does not require the bilinear form to be definite.} twist if the restriction of $\tau$ to $H^{3}(T/T)\simeq H^{1}(T)^{\otimes2}\oplus H^{3}(T)$ is concentrated in the first summand (c.f. Remark \ref{deg3twist}) and defines a symmetric, non-degenerate, definite bilinear form on $H_{1}(T)$. The twist is called \textit{positive} (\textit{negative}) if that bilinear form is positive (negative) definite.
\end{defn}

Let $G(1)$ denote the identity component of a compact Lie group $G$. In this section the groups $^{\tau}\vec{K}^{*}_{S^1}(G(1)/G)$ will be calculated. The calculation will eventually break into two subsections---according to whether the twist $\tau$ is positive or negative definite---which have different flavors. The asymmetry not mysterious: it comes precisely from the asymmetry in completing $\mathbb{Z}[q^{\pm}]$ to $\mathbb{Z}((q))$ rather than $\mathbb{Z}[[q,q^{-1}]]$. Morally, relaxing finite-dimensionality of vector bundles in \textit{positive} powers of $q$ doesn't amount to much at \textit{negative} level, since the cocycles (are expected to) correspond to loop group representations, which at negative level are of \textit{negative energy} and have bounded \textit{above} $S^1$-eigenspaces. Thus the representations themselves do not produce $\vec{K}_{S^1}$-cocycles without being `finitized' by a Fredholm operator\footnote{This is the (rather involved) FHT-Dirac construction (c.f. [FHT3] Section V).} and the story collapses to match classical twisted $K$-theory.
 
Let $T$ denote a maximal torus of $G$ and let $N$ be the normalizer of $T$. The plan is to make a preliminary calculation over $T/N$ and then transport that to $G(1)/G$ via the natural faithful map $\omega:T/N\rightarrow G/G$. A few lemmas can be stated and proved uniformly for positive and negative twists. 

\begin{lem}\label{stalksofKhat} (The stalks of $\vec{K}^*_{S^1})$ 
Let $G$ be a compact Lie group and $z\in Z(G)$ a central element. Then $z$ defines a $B\mathbb{Z}$-action on $\text{pt}/G$, and the $B\mathbb{Z}$-quotient (c.f. Definition \ref{BZquotient}) is of the form $\text{pt}/\tilde{G}$ where $G\rightarrow \tilde{G}\rightarrow S^1$ is a group extension. Any twist $\tau\in H^3_{S^1}(BG)$ defines a $U(1)$-central extension $\tilde{G}^\tau\rightarrow  \tilde{G}$. Let $G^\tau$ be the pullback of $\tilde{G}^\tau$ to $G$, so that $\tilde{G}^\tau/G^\tau=S^1$. Let $
\tilde{\Lambda}^\tau$ and $\Lambda^\tau$ be the set of $\tau$-affine (c.f. Footnote \ref{affinecharacter}) weights of $\tilde{G}^\tau$ and $G^\tau$, and let $W$ be the Weyl group\footnote{The Weyl groups of all extensions in sight are canonically isomorphic.}. Let $
\tilde{\Lambda}^\tau_\text{di}\subset\tilde{\Lambda}^\tau$ and $\Lambda^\tau_\text{di}\subset\Lambda^\tau$ be the subsets of dominant\footnote{I take this to mean dominant with respect to \textit{any} choice of positive Weyl chamber.} integral weights. Let $\tilde{\pi}:\tilde{\Lambda}^\tau_\text{di}/W\rightarrow\text{pt}/G$ and $\pi:\Lambda^\tau_\text{di}/W\rightarrow\text{pt}/G$ be the projections. Any choice of fractional splitting $\psi:\tilde{G}^\tau\leftarrow S^1_d$ (c.f. Section 2.1) induces an isomorphism $\Psi:\tilde{\Lambda}^\tau\leftarrow \Lambda^\tau\times \mathbb{Z}$. Let $K^*_c$ denote $K$-theory with compact supports\footnote{This is the reduced $K$-theory of the one-point compactification.} and let $^{\tilde{\pi}^*\tau}K^0_\star(\tilde{\Lambda}^\tau_\text{di}/W)\subset\ ^{\tilde{\pi}^*\tau}K^0(\tilde{\Lambda}_\text{di}^\tau/W)$ be the subgroup of classes whose image under $\Psi^*:\ ^{\tilde{\pi}^*\tau}K^0(\tilde{\Lambda}^\tau_\text{di}/W)\rightarrow \ ^{\pi^*\tau}K^0(\Lambda^\tau_\text{di}/W\times\mathbb{Z})=\ ^{\pi^*\tau}K^0(\Lambda^\tau_\text{di}/W)[[q,q^{-1}]]$ is contained in $^{\pi^*\tau}K^0_c(\Lambda^\tau_\text{di}/W)((q))$ for all fractional splittings $\psi$. Let $\tilde{V}\rightarrow \tilde{\Lambda}^\tau_\text{di}/W$ be the canonical vector bundle whose fiber at a point is a copy of the $\tilde{G}^\tau$-representation labelled by that point. If $M$ is and $R$-module and $r\in R$, write $rM$ for the subgroup $\{rm| m\in M\}$.  Then $^\tau\vec{K}^1_{S^1}(\text{pt}/G)=0$ and `summation along the fiber' defines an isomorphism
$$\tilde{\pi}_!:[\tilde{V}]\big(\ ^{\tilde{\pi}^*\tau}K^0_\star(\tilde{\Lambda}^\tau_\text{di}/W)\big)\rightarrow\ ^\tau\vec{K}^0_{S^1}(\text{pt}/G).$$
Finally, for $M\in R^\tau(G)$ write $\chi_M$ for its character. For $\gamma\in\text{Hom}(S^1,G)$ and $\xi=\sum M_kq^k\in R^\tau(G)((q))$ define $\gamma\cdot\xi=\sum M_k\chi_{M_k}(\gamma(q))q^k$. Then any choice of local splitting $\psi$ induces an injection $^\tau\vec{K}^0_{S^1}(\text{pt}/G)\hookrightarrow R^\tau(G)((q))$ which is an isomorphism onto the subgroup $R_\star^\tau(G)((q))$ of elements $\xi$ for which $\gamma\cdot\xi$ is in $R^\tau((q))$ for all $\gamma\in\text{Hom}(S^1,G)$.
\end{lem}

\begin{proof}
The first claim that is not immediate from the definitions and the standard representation theory of compact Lie groups is the implicit well-definedness of the displayed map. Let $R^\tau(G)$ be the group of $\tau$-projective\footnote{\label{tauprojectiverep}That is, the subgroup of the representation ring $R(G^{\tau})$ where the central $U(1)$ acts by scalar multiplication.} representations of $G$. It may be helpful to recall that the uncompleted analog of this map is the isomorphism $\tilde{\pi}_!^c:[\tilde{V}]\big(\ ^{\tilde{\pi}^*\tau}K^0_c(\tilde{\Lambda}^\tau_\text{di}/W)\big)\rightarrow\ ^\tau K^0_{S^1}(\text{pt}/G)=R^\tau(\tilde{G})$ defined as follows: for $\lambda\in \tilde{\Lambda}^\tau_\text{di}/W$ let $\tilde{V}_\lambda$ denote a copy of the corresponding irreducible representation $G^\tau$. Then a compactly supported virtual vector bundle $E\rightarrow\tilde{\Lambda}^\tau_\text{di}/W$ is sent to the direct sum $\oplus_{\tilde{\Lambda}^\tau_\text{di}/W}\tilde{V}_\lambda\otimes E_\lambda$. 

It is immediate from the definition (c.f. Definition \ref{qfred} and \ref{Khatdef}) that classes in $^\tau\vec{K}^0_{S^1}(\text{pt}/G)$ are represented by certain possibly infinite dimensional representations of $\tilde{G}^\tau$. To describe them, choose a fractional splitting $\psi:\tilde{G}^\tau\leftarrow S^1_d$. From Definition \ref{qfred} it follows that there is an injective map $i_\psi:\  ^\tau\vec{K}^0_{S^1}(\text{pt}/G)\rightarrow R^\tau(G)((q))$. For an element $M\in R^\tau(G)$ let $\chi_M$ denote its virtual character. Any other choice of fractional splitting $\psi'$ is of the form $\psi'(q)=\psi(q)\gamma(q)$ for some $\gamma:S^1\rightarrow G$. Therefore, if $\xi\in\ ^\tau\vec{K}^0_{S^1}(\text{pt}/G)$ and $i_\psi\xi=\sum M_kq^k\in R^\tau(G)((q))$ then $i_{\psi'}\xi=\sum M_k\chi_{M_k}(\gamma(q))q^k\in R^\tau(G)((q))$. So again it follows directly from Definition \ref{qfred} again that $i_\psi$ is an isomorphism onto the subgroup $R_\star^\tau(G)((q))\subset R^\tau(G)((q))$ of elements $\sum M_kq^k$ for which $\sum M_k\chi_{M_k}(\gamma(q))q^k$ is also in $R^\tau(G)((q))$.

Now the image $\pi_!([V]\xi)$ under the displayed map in the lemma (defined by the same formula that ends the fist paragraph of this proof) certainly defines a possibly infinite dimensional representation of $\tilde{G}^\tau$. By assumption $\Psi^*\xi\in\  ^{\pi^*\tau}K^0_c(\Lambda^\tau_\text{di}/W)((q))$, so write $\Psi^*\xi=\sum E_kq^k$. Moreover, for another splitting $\psi'=\psi\gamma$ (see the previous paragraph) we also have $(\Psi')^*\xi\in\  ^{\tilde\pi^*\tau}K^0_c(\Lambda^\tau_\text{di}/W)((q))$. In analogy with $\tilde{V}$ let $V\rightarrow \Lambda^\tau_\text{di}/W$ be the canonical vector bundle whose fiber at a point is a copy of the $G^\tau$ representation labeled by that point. But it is clear that $\pi_!^c([V]\Psi^*\xi)=i_\psi(\tilde{\pi}_![\tilde{V}]\xi)$, so the conditions defining $^{\tilde{\pi}^*\tau}K^0_\star(\tilde{\Lambda}^\tau_\text{di}/W)$ are tautologically the conditions for $\tilde{\pi}_!$ to be well-defined. Since $\pi_!^c$ is an isomorphism (see the first paragraph of this proof), the same equation shows that $\i_\psi\circ\tilde{\pi}_!$ is an isomorphism onto its image, which finishes the proof.
\end{proof}

\begin{lem}\label{twistfamily}
Let $N$ be the normalizer of a maximal torus $T\subset G$ and write $W=N/T$.  Let $\tau$ be a twist in $H^3_{S^1}(T/N)$. For each $t\in T$ with stabilizer $N_t\subset N$, $\tau$ defines a group $\tilde{N}_t^{\tau(t)}$ which is a  $U(1)$-central extension of an $N_t$-extension $\tilde{N}_t$ of $S^1$, and also an extension of $S^1$ by a $U(1)$-extension $N^{\tau(t)}$ of $N_t$. This is more lucidly indicated in the commutative diagram below:
$$\begin{tikzcd}
 U(1)\arrow[r,hook]&N^{\tau(t)}_t\arrow[r]\arrow[d] &N_t\arrow[d,hook] \\
 U(1)\arrow[r,hook]&\tilde{N}_t^{\tau(t)}\arrow[d]\arrow[r] &\tilde{N}_t\arrow[d] \\
 & S^1 &S^1 
\end{tikzcd}$$
Restriction to the maximal torus $T\subset N_t$ defines a group $\tilde{T}^{\tau(t)}$ which is a $U(1)$-central extensions of $T$-extensions of $S^1$, and as $t$ varies these $\tilde{T}^{\tau(t)}$ assemble into a bundle of groups over $T$ whose fiber over $1$ is equal to $U(1)\times T\times S^1$.
\end{lem}
\begin{proof} First note that the explicit model of the $B\mathbb{Z}$-quotient $T/N/B\mathbb{Z}$ given in Definition \ref{BZquotient} produces a bundle of groups $T\times N\times_\mathbb{Z}\mathbb{R}\rightarrow T$ whose fiber at $t\in T$ is an extension $N\rightarrow \tilde{N}_t\rightarrow \mathbb{R}/\mathbb{Z}=S^1$. 
For each point $t\in T$ with stabilizer $N_t\subset N$, pullback along the inclusion $i_t:\{t\}/N_t\rightarrow T/N$ produces a class $\tau(t):=i_t^*\tau\in H^3(\{t\}/N_t):=H^3BN_t$, i.e a central extension $U(1)\rightarrow N^{\tau(t)}_t\rightarrow N_t$. Pulling back along $T\rightarrow N_t$ produces the desired central extension of $T$, denoted $T^{\tau(t)}\rightarrow T$. The ability to trivialize the extensions at the fiber over $1\in T$ is a direct consequence of the fact that all extensions of $S^1$ by a torus are trivializable, and all extensions of a torus by $U(1)$ are trivializable. The bundle of groups over $T$ is presented explicitly in Definition \ref{explicitmodelTtau} below.
\end{proof}


\begin{defn}\label{explicitmodelTtau} Let $\tau\in H^3_{S^1}(T/T)$ be a strongly topologically regular twist and let $\tau$ also denote the corresponding bilinear form on $H_1T=\pi_1T$ (c.f. Definition \ref{strongtopreg}). Let $\Lambda$ be the character lattice of $T$. Contraction with the bilinear form on $H^{1}T$ defined by $\tau$ gives a map $\kappa^{\tau}:H_{1}T\rightarrow H^{1}T=\Lambda$. Define an action of $\pi_1T$ on $\mathfrak{t}\times U(1)\times T\times\mathbb{R}$ by the formula 
$$p\cdot(X,e^{2\pi i\theta},t,r)=(X+p,e^{2\pi i(\theta+\tau(p,p)r)}\kappa^\tau p(e^X)^{-1},tp(e^{2\pi ir}),r).$$
There is a commuting $\mathbb{Z}$-action (c.f. Definition \ref{BZquotient}) defined by the formula 
$n\cdot(X,e^{2\pi i\theta},t,r)=(X,e^{2\pi i\theta},te^{-nX},r+n)$
and write $L^\tau$ for the quotient by $\pi_1T\times\mathbb{Z}$. Define the groupoid $\mathbf{L}^\tau=(L^\tau\rightrightarrows T)$ by declaring the source and target map to both be $[(X,\theta,t,r)]\mapsto e^X$. This defines the bundle of groups over $T$ alluded to in Lemma \ref{twistfamily}. The map $[(X,e^{2\pi i\theta},t,r)]\mapsto[(X,t,r)]$ defines a morphism $\mathbf{L}^\tau\rightarrow T/T/B\mathbb{Z}$ which presents the $B\mathbb{Z}$-equivariant twist associated to $\tau$. 
\end{defn}
 
\begin{lem}\label{ptau}
There is a $W$-equivariant covering space $\pi:P_\tau\rightarrow T$ which is the bundle of affine weights\footnote{\label{affinecharacter}
Recall that an \textit{affine} weight of a central extension $U(1)\rightarrow G\rightarrow H$ is a weight of $G$ which restricts to the identity character of $U(1)$, which is contained in any maximal torus.} associated to the bundle of central extensions defined by the groupoid $\mathbf{L}^\tau$ of  Definition \ref{explicitmodelTtau}. In particular, for each $t\in T$, if $T^{\tau(t)}$ denotes the automorphism group of $t$ in the groupoid $\mathbf{L}^\tau$ and $\tilde{\Lambda}^{\tau(t)}$ is the set of affine weights of $T^{\tau(t)}$, then there is a canonical isomorphism $\pi^{-1}(t)\simeq\tilde{\Lambda}^{\tau(t)}.$
\end{lem}

\begin{proof} 
The proof is an explicit construction of $P_\tau$. Define an action of  $\pi_{1}T$ on $\mathfrak{t}\times\Lambda\times\mathbb{Z}$ by (c.f. [PS] 4.9.5)
$$\pi_{1}T\ni p:(X,\lambda,n)\mapsto(X+p,\lambda-\kappa^{\tau}p,n+\tau(\kappa^{\tau}p,\kappa^{\tau}p) + \lambda(p)).$$
The desired covering map is
$$\pi:P_\tau:=\mathfrak{t}\times_{\pi_{1}T}(\Lambda\times\mathbb{Z}) \rightarrow T$$
$$[(X,\lambda,n))]\mapsto [X].$$ 
To identify $\pi^{-1}(t)$ with $\tilde{\Lambda}^{\tau(t)}$, view $L^\tau$ as the $\pi_1T$-quotient of $(\mathfrak{t}\times U(1)\times T\times\mathbb{R})/\mathbb{Z}$. For $(t,r)\in T\times \mathbb{R}$ write $[r]$ for the corresponding element of $S^1=\mathbb{R}/\mathbb{Z}$, and $t_r(X)=te^{-n_rX}$ where $n_r\in \mathbb{Z}$ is such that $r+n_r\in[0,1)$. Then $(\mathfrak{t}\times U(1)\times U(1) T\times\mathbb{R})/\mathbb{Z}$ can be identified with $\mathfrak{t}\times U(1)\times T\times S^1$ via the map $[(X,e^{2\pi i\theta},t,r)]\mapsto (x,e^{2\pi i\theta},t_0,[r])$, and under this identification the $\pi_1T$ action on $\mathfrak{t}\times U(1)\times T\times S^1$ becomes $p\cdot(X,e^{2\pi i\theta},t,\phi)=(X+p,e^{2\pi i\theta+\tau(p,p)\phi}\kappa^\tau p(e^X)^{-1},tp(e^{2\pi i\phi}),\phi).$ The associated action on the subset of $\mathfrak{t}\times\text{Hom}(U(1)\times T\times S^1,U(1))$ consisting of pairs $(X,f)$ such that $f$ restricts to the identity character of the $U(1)$ factor is precisely the action defining $P_\tau$.
\end{proof}


\begin{defn}\label{compactification}
Define a partial compactification $\vec{P}_\tau$ of $P_\tau$.  
Let $T_\infty$ denote a copy of $T$ with the trivial $W$-action. As a set, the partial compactification of $P_{\tau}$ is $\vec{P}_{\tau}=P_{\tau}\coprod T_\infty$. Let $\vec{\pi}:=\pi\coprod \text{id}_{T}:\vec{P}_{\tau}\rightarrow T$ be the natural projection. The topology of $\vec{P}_{\tau}$ is generated by the open sets of $P_{\tau}$, the sets $\vec{\pi}^{-1}(U)$ for $U\subset T$ open, and the collection of sets
 $\Big\{P_{\tau}\setminus C\coprod T_\infty\Big\}$ such that
\begin{enumerate}[itemsep=0pt]
\item (a `niceness' condition) $C\subset P_\tau$ is closed and its preimage in $\mathfrak{t}\times\Lambda\times\mathbb{Z}$ has convex intersection with $\mathfrak{t}\times\{\lambda\}\times\{n\}$ for all $\lambda$ and $n$ and
\item (a condition directly related to the definition of $\vec{K}_{S^1}^*$, c.f Lemma \ref{stalksofKhat}) for any $t\in T$ and any lift $X\in \mathfrak{t}$ (i.e. $e^X=t$), the preimage of $C\cap \pi^{-1}(t)$ under the isomorphism $\pi^{-1}(t)\xleftarrow{\sim} \Lambda^{\tau(t)}\times\mathbb{Z}$ defined by $X$ is a subset whose intersection with $\Lambda\times {n}$ is finite for all $n$ and empty for sufficiently negative $n$.
\end{enumerate}
\end{defn}


 \begin{lem}\label{keylemma} (Key Lemma\footnote{This is the analog of the `Key Lemma' of [FHT3] (Lemma 5.2).}) Let $N\subset G$ be the normalizer of $T$ and $W=N/T$ the Weyl group. Note that $T/N$ is a full $B\mathbb{Z}$-subgroupoid of $\mathcal{L}(\text{pt}/N)\simeq N/N$.  Let $\tau$ be a strongly topologically regular (c.f. Definition \ref{strongtopreg}) twist in $H^3_{S^1}(T/N)$. There is a $W$-equivariant map $
 \vec{\pi}:\vec{P}_\tau\rightarrow T$, a subspace $T_\infty\subset\vec{P}_\tau$ and an isomorphism of $\mathbb{Z}((q))$-modules
$$\alpha:\ ^{\vec{\pi}^*\tau} K^{*}(\vec{P}_{\tau}/W, T_\infty/W)\xrightarrow {\sim}\ ^{\tau} \vec{K}^{*}_{S^1}(T/N).$$

\end{lem}
\begin{rem}
Note that the domain of the display is an ordinary twisted $K$-theory (not $\vec{K}_{S^1}$!) group. Its $\mathbb{Z}((q))$-module structure is described in the proof.
\end{rem}

\begin{proof} 

To prove the lemma the first order of business is to define a $\mathbb{Z}((q))$-module structure on the domain of the displayed map. To do that it suffices to specify the action of $q$, which is defined to act via the map on twisted $K$-theory induced by pullback along the shift isomorphism `sh' defined by $\text{sh}([(X,\lambda,n)])=[(X,\lambda,n-1)]$.

The next order of business is to define the map $\alpha$. Recall that $P_\tau$ is the bundle of (affine) characters corresponding to a bundle of groups over $T$ (c.f. Lemma \ref{ptau}). In particular, any point in $s\in P_\tau$ defines a 1-dimensional representation: namely if $\pi(s)=t$ then the character labeled by $s$ defines a representation of the group $T^{\tau(t)}$ (c.f. Lemma \ref{twistfamily}) on $\mathbb{C}$, and these assemble into a vector bundle $V\rightarrow P_\tau$. Extending by a trivial rank 1 vector bundle over $T_\infty$ defines a vector bundle $\vec{V}\rightarrow\vec{P}_\tau$. The \textit{proposed} definition of $\alpha$ is the composite
$$^{\vec{\pi}^*\tau} K^{*}(\vec{P}_{\tau}/W, T_\infty/W)\xrightarrow{\otimes[\vec{V}]}\ ^{\vec{\pi}^*\tau} K^{*}(\vec{P}_{\tau}/W, T_\infty/W) \xrightarrow{\vec{\pi}_!}\ ^{\tau} \vec{K}^{*}_{S^1}(T/N),$$
but some proof is required to show that this is well defined. Namely we need to show that for any $\xi$ in the image of the first map, the possibly infinite dimensional vector bundle $\vec{\pi}_!\xi$ given by `summation along the fibers' satisfies the conditions to define a class in the target. Since $\vec{\pi}$ is a fiber bundle over an equivariantly locally contractible base, it suffices to check this pointwise in the base. So choose a point $t\in T$, with stabilizer $N_{t}$ and `local Weyl group' $W_t=N_t/T$. Write $\vec{V}_t$ for the restriction of $\vec{V}$ to $\vec{\pi}^{-1}(t)$. We must show that `summation along the fibers' produces a well-defined map
$$(\vec{\pi}_t)_!:[\vec{V}_t]\Big(\ ^{\vec{\pi}^*\tau(t)} K^{*}(\vec{\pi}^{-1}(t)/W_t, \{t\}_\infty/W_t)\Big)\rightarrow\ ^{\tau(t)} \vec{K}^{*}_{S^1}(\{t\}/N_t).$$
Let $\tilde{\Lambda}^{\tau(t)}$ and $\Lambda^{\tau(t)}$ denote the set of $\tau(t)$-affine weights of $\tilde{N}_t$ and $N_t$. Note that $\vec{\pi}^{-1}(t)=\tilde{\Lambda}^{\tau(t)}\coprod\{t\}_\infty$ (c.f. Lemma \ref{compactification}), and any choice of fractional splitting $\psi_t:\tilde{N}_t\leftarrow S^1_d$ induces an isomorphism $\Psi_t:\tilde{\Lambda}^{\tau(t)}\leftarrow\Lambda^{\tau(t)}\times\mathbb{Z}$. Given the results of Lemma \ref{stalksofKhat} (note that $\tilde{\Lambda}^{\tau(t)}=\tilde{\Lambda}^{\tau(t)}_\text{di}$ and $\Lambda^{\tau(t)}=\Lambda^{\tau(t)}_\text{di}$), the map is zero in degree 1 and so it suffices to show that the degree 0 part of the domain of the previously displayed map is isomorphic to $[\vec{V}_t]\big(\ ^{\vec{\pi}^*\tau(t)}K^0_\star(\tilde{\Lambda}^{\tau(t)}/W_t)\big)$ as a subgroup of $^{\vec{\pi}^*\tau(t)}K^0(\tilde{\Lambda}^{\tau(t)}/W_t)$. But this is straightforward (the topology pf $\vec{P}_\tau$ was concocted for this purpose): any relative class in the domain of the previously displayed may must have support contained in a set $C$ satisfying the two condition listed in the definition of the topology of $\vec{P}_\tau$. I claim that this support condition on classes in $^{\vec{\pi}^*\tau(t)}K^0(\tilde{\Lambda}^{\tau(t)}/W_t)$ is equivalent the definition of $^{\vec{\pi}^*\tau(t)}K^0_\star(\tilde{\Lambda}^{\tau(t)}/W_t)$. 
This follows immediately from a few observations: by Lemma \ref{ptau} there is a canonical isomorphism $\pi^{-1}(t)=\tilde{\Lambda}^{\tau(t)}$. Thus, the set of induced isomorphisms $\Psi:\tilde{\Lambda}^{\tau(t)}\leftarrow \Lambda^{\tau(t)}$ induced by choices of fractional splitting $\psi:\tilde{N}_t^{\tau(t)}\leftarrow S^1_d$ is equal to the set of isomorphisms $\pi^{-1}(t)\rightarrow  \Lambda^{\tau(t)}$ induced by choices of lifts $X\in\mathfrak{t}$, $e^X=t$.

Not only have I shown that $\alpha$ is well-defined, but also that it is an isomorphism at each point. By equivariant local contractibility of $T/N$ it follows that $\alpha$ is locally an isomorphism, and by the excision axiom (or equivalently, the Mayer-Vietoris axiom) it follows that $\alpha$ is an isomorphism. 

It remains to show that $\alpha$ is a $\mathbb{Z}((q))$-module map. That follows immediately form the definition of the $q$-action in the first paragraph of this proof together with the fact that the `shift' isomorphism defined there coincides on each fiber $\pi^{-1}(t)=\tilde{\Lambda}^{\tau(t)}$ with the action of the generator of $\text{Hom}(S^1,U(1))$ (recall that $\tilde{\Lambda}^{\tau(t)}$ is a subgroup of $\text{Hom}(T^{\tau(t)},U(1))$). 
\end{proof}

Consider the natural inclusion $\omega:T/N\rightarrow G/G$.
Recall that the $B\mathbb{Z}$-actions on both groupoids are the automorphisms of the identity defined by $t\mapsto t$ and $g\mapsto g$ (c.f. Example \ref{loopgroupoid}). Hence the map is $B\mathbb{Z}$-equivariant (c.f. Definition \ref{BZdef}) and so there is a corresponding map in $\vec{K}_{S^1}$-theory 
$$ ^{\tau}\vec{K}^{*}_{S^1}(G/G) \xrightarrow{\omega^{*}}\ ^{\omega^*\tau} \vec{K}^{*}_{S^1}(T/N).$$
I would like to define a pushforward. 
Let $N$ act on $G\times T$ and $G\times G$ by the formula $n(g,k)=(gn^{-1},nkn^{-1})$. Let $G$ act on the quotients $G\times_NT$ and $G\times_NG$ by left translation on the left factor. 
The natural inclusion $G\times_NT\hookrightarrow G\times_NG$ induces a fully faithful map $i:(G\times_NT)/G\hookrightarrow(G\times_NG)/G$. Moreover, the inclusion $N\hookrightarrow G$ and the natural map $T=\{1\}\times T\rightarrow G\times_NT$ define an equivlanece $T/N\rightarrow(G\times_NT)/G$. Finally, the map $G\times_NG\rightarrow G$ defined by $[(g,k)]\mapsto gkg^{-1}$ defines a map $j:(G\times_NG)/G\rightarrow G/G$.

\begin{defn}\label{segalind} Let $\iota:N\rightarrow G$ be a map of compact Lie groups whose kernel is finite and whose image is a closed Lie subgroup. Define \textit{Segal induction} (c.f. [Se] Section 2) as follows: first suppose that $\iota$ is injective. Then Segal induction is the map $\iota_!:R(N)\rightarrow R(G)$ which sends $M\in R(N)$ to the analytic index of $d+d^{*}$ acting on the de Rham complex of the associated virtual vector bundle $G\times_{N} M\rightarrow G/N$, in a chosen equivariant orthogonal structure. If $\iota$ has finite kernel $K$, define Segal induction to be the composite of the `take $K$-invariants' map $R(N)\rightarrow R(N/K)$ and the previously defined Segal induction along the injective map $N/K\rightarrow G$.
\end{defn}
\begin{lem}\label{segalcharacter}
Let $\iota:N\hookrightarrow G$ be the inclusion of a closed subgroup into a compact Lie group. Let $\iota_!:R(N)\rightarrow R(G)$ denote Segal induction (c.f. Definition \ref{segalind}). If $M$ is a virtual representation let $\chi_M$ denote its character. For a regular element $t\in G$ let $F_t$ denote the set of cosets $gN\in G/N$ such that $g^{-1}tg\in N$. Then $F_t$ is finite and for any $M\in R(N)$
$$\chi_{\iota_!M}(t)=\sum_{gN\in F_t}\chi_M(g^{-1}tg).$$
Since regular elements are dense in $G$ this determines $\chi_{\iota_!M}$ completely. Finally, suppose that $N$ is the normalizer of a maximal torus $T\subset G$. Then for regular $t\in T$ the set $F_t$ is the singleton $\{1N\}$, so $\chi_{\iota_!M}$ and $\chi_M$ agree on $T$.
\end{lem}
\begin{proof} Note that $g^{-1}tg \in N$ is equivalent to $tgN=gN$, i.e. the condition for $gN\in G/N$ to be a fixed point of the action of $t\in G$. The finiteness of $F_t$ is then [Se] Proposition 1.9. The character formula is a direct consequence of the Atiyah-Bott fixed point formula (c.f. [Se], end of Section 2). To prove the alst statement, note that if $t\in T$, then $g^{-1}tg\in N$ implies that $g^{-1}tg=t'\in T$, and for any $s\in T$ $g^{-1}sg\in Z(t')_1$ (the identity component of the centralizer of $t'$). So if $t$ is regular, $g^{-1}Tg\subset T$, so $g\in N$. Since regular elements are dense in $T$ the character formula gives the desired equality of characters on $T$.

\end{proof}

\begin{lem}\label{omegafactorsegal}
The map $\omega:T/N\rightarrow G/G$ factors into two maps that admit pushforwards in classical twisted $K$-theory
$$\begin{tikzcd}
 T/N \arrow[r,"\sim"]& (G\times_NT)/G\arrow[r,hook,"i"] &(G\times_NG)/G\arrow[r,"j"] & G/G.
\end{tikzcd}$$
For any twist $\tau\in H^3(G/G)$ and any point $t\in T$ with stabilizers $N_t$ and $G_t$ in $N$ and $G$, the composite pushforward $j_*i_*$ coincides with Segal induction ($\tau(t)$ is defined in the proof):
$$j_*i_*:\ ^{\tau(t)}K^*(\{t\}/N_t)\simeq R^{\tau(t)}(N_t)\rightarrow R^{\tau(t)}(G_t)\simeq\ ^{\tau(t)}K^*(\{t\}/N_t).$$
\end{lem}
\begin{warn}\label{BZwarn}
This is not a $B\mathbb{Z}$-equivariant factorization. The groupoid $G\times_NG/G$ admits no obvious $B\mathbb{Z}$-action for which $j$ is $B\mathbb{Z}$ equivariant.
\end{warn}
\begin{proof} The map $i$ is an embedding and so it has a normal bundle. The map $j$ is a fiber bundle with fiber $G/N$ and so it has a relative normal bundle. Since every vector bundle admits a (possibly twisted) Thom isomorphism in twisted $K$-theory (c.f. [FHT1] 3.6), both $i$ and $j$ admit pushforwards $i_*$ and $j_*$. It remains to identify the local behavior of the pushforward with Segal induction.

Consider a point $t\in T$ with stabilizer $G_t\subset G$. Write $T_t$ for a maximal torus of $G_t$ and let $N_t\subset N$ be the normalizer of $T_t$ in $G_t$. Write $\tau(t)$ for the restriction of $\tau$ to $\{t\}/G_t$.
A local (i.e. in an infinitesimal neighborhood of $\{t\}$) presentation of $\omega$ is the $G_t$-equivariant composite
$$\begin{tikzcd}
 \omega_t:G_t\times_{N_{t}}\mathfrak{t}_t\arrow[r,"i_t",hook]&G_t\times_{N_{t}}\mathfrak{g}_t\arrow[r,"j_t"] & \mathfrak{g}_t 
\end{tikzcd}$$ 
defined by $[(g,X)]\mapsto[(g,X)]\mapsto\text{Ad}_{g}(X)$.
 Write $\sigma(\nu_{i_t})$ for the twist of the Thom isomorphism along the normal bundle $\nu_{i_t}$ of $i_t$, which is the vector bundle $\nu_{i_t}:G_t\times_{N_t}(\mathfrak{t}_t\oplus\mathfrak{g}_t/\mathfrak{t}_t)\rightarrow G_t\times_{N_t}\mathfrak{t}_t$. Linearly  contracting $\mathfrak{t}_t$ and $\mathfrak{g}_t$ (which is $G_t$-equivariant) induces the vertical isomorphisms in the following diagram (to ease the notational burden I have left various twists syntactically unspecified, they will not be referred to again) 
 $$\begin{tikzcd}
  & ^\sigma\vec{K}_{G_t}^*(G_t\times_{N_t}\mathfrak{t}_t)\arrow[r,"(i_t)_*"]
  &^{\sigma'}\vec{K}_{G_t}^*(G_t\times_{N_t}\mathfrak{g}_t)\arrow[r,"(j_t)_*"] &\ ^{\sigma''}\vec{K}_{G_t}^*(\mathfrak{g}_t) & \\
   ^{\tau(t)}K_{N_t}(\{t\})\arrow[r,"\sim"] & ^{\sigma'''}\vec{K}_{G_t}^*(G_t/N_t))\arrow[u,"\sim"vert]\arrow[r,"(i_t')_*"]&^{\sigma^{(iv)}}\vec{K}_{G_t}^*(G_t/N_t)\arrow[r,"(j_t')_*"]\arrow[u,"\sim"vert] &^{\sigma^{(v)}}\vec{K}_{G_t}^*(\text{pt})\arrow[u,"\sim"vert] \arrow[r,"\sim"]& ^{\tau(t)}K_{G_t}(\{t\}).
 \end{tikzcd}$$
Since the inverse of the equivariant contraction of $\mathfrak{t}_t$ is the inclusion $G_t\times_{N_t}\{0\}\hookrightarrow G_t\times_{N_t}\mathfrak{t}_t$ the vector bundle $\nu_{i_t}$ restricts to the vector bundle $G_t\times_{N_t}\mathfrak{g}_t/\mathfrak{t}_t\rightarrow G_t/N_t$, which is the tangent bundle $T(G_t/N_t)$. Hence the horizontal map $(i_t')_*$ is multiplication by the euler class\footnote{The euler class depends on a choice of complex orientation of $K$-theory, I am not specifying one because shortly it will not matter.} of $T(G_t/N_t)$. 

Now $(j_t')_*$ is the pushforward along the $G_t$-equivariant map $G_t/N_t\rightarrow\text{pt}$. So the composite of of the bottom row is ``multiply by the euler class of $G_t/N_t$ and pushforward along $G_t/N_t\rightarrow\text{pt}$." This coincides with the Becker-Gottlieb transfer along $G_t/N_t\rightarrow\text{pt}$\footnote{This follows directly from the definitions of the Becker-Gottlieb transfer and of the pushforward, c.f. [N] for a detailed construction of the Becker-Gottlieb transfer in equivariant cohomology theories.}. This extends to the twisted setting: if $G^\tau\rightarrow G$ is a central extension of $G$ inducing a central extension $H^{\iota^*\tau}\rightarrow H$ of $H$ then Segal induction along $\iota^\tau:H^{\iota^*\tau}\hookrightarrow G^\tau$ produces a map $\iota^\tau_!:R(H^{\iota^*\tau})\rightarrow R(G^\tau)$ which I claim restricts to a map $R^{\iota^*\tau}(H)\rightarrow R^\tau(G)$ (c.f. Footnote \ref{tauprojectiverep}). Indeed this follows from the last statement in Lemma \ref{segalcharacter} since $U(1)$ is by definition central in $H^{\iota^*\tau}$ and $G^\tau$ and hence contained in any maximal torus. 
\end{proof}
\begin{lem}\label{omegapush}
The inclusion $\omega:T/N\rightarrow G/G$ admits a pushforward in $\vec{K}_{S^1}^*$-theory.
\end{lem}
\begin{proof}
The proof is an explicit construction of the map, leveraging the existence of the non-$B\mathbb{Z}$-equivariant pushforward in classical twisted $K$-theory provided by Lemma \ref{omegafactorsegal}. The plan is to define $\omega_*$ on very small open sets and prove that these patch together to a globally defined map.
Fix $t\in T$ with $N$-stabilizer $N_t$ and $G$-stabilizer $G_t$. Fix a sufficiently small neighborhood $i_U:U\hookrightarrow G$ such that $U$ and $\omega^{-1}(U)=U\cap T$ are locally contractible, so that $U/G\simeq \{t\}/G_T$ and $\omega^{-1}(U)/N\simeq \{t\}/N_t$. Every choice of fractional splitting $\psi_N$ of
$N_t\rightarrow\tilde{N}_t\rightarrow S^1$ (c.f. Lemma \ref{stalksofKhat}) induces a fractional splitting $\psi_G$ of $G_t\rightarrow\tilde{G}_t\rightarrow S^1$. By Lemma \ref{stalksofKhat} these splittings provide an identification of $^{i_U^*\omega^*\tau}\vec{K}_{S^1}^*(\omega^{-1}(U)/N)$ with a subgroup $R_\star^{\omega^*\tau(t)}(N_t)((q))$ of  $^{i_U^*\omega^*\tau}K^*(\omega^{-1}(U)/N)((q))$ and an identification of  $^{i_U^*\tau}\vec{K}_{S^1}^*(U/G)$ with a subgroup  $R_\star^{\tau(t)}(G_t)((q))$ of $^{i_U^*\tau}K^*(U/G)((q))$.
Applying the non-equivariant pushforward $j_*i_*$ power-by-power in $q$ defines the right most vertical map in the following diagram, whose dashed arrows indicate maps that would make the diagram commute but are yet to be proven well-defined

$$\begin{tikzcd}
 ^{i_U^*\omega^*\tau}\vec{K}_{S^1}^*(\omega^{-1}(U)/N)\arrow[d,dashed,"\omega_*(U)"]\arrow[r,"\psi_N","\sim"']&R_\star^{\omega^*\tau(t)}(N_t)((q))\arrow[r,hook] \arrow[d,"\omega_*^\star(U)",dashed]& ^{i_U^*\omega^*\tau}K^*(\omega^{-1}(U)/N)((q))=R^{\omega^*\tau(t)}(N_t)((q))\arrow[d,"j_*i_*(U)"]  \\
  ^{i_U^*\tau}\vec{K}_{S^1}^*(U/G)\arrow[r,"\psi_G","\sim"']& R_\star^{\tau(t)}(G_t)((q))\arrow[r,hook]& \ ^{i_U^*\tau}K^*(U/G)((q))=R^{\tau(t)}(G_t)((q)).
\end{tikzcd}$$
To show that $\omega^\star_*(U)$ is well defined I must show that if $\xi\in R_\star^{\omega^*\tau(t)}(N_t)((q))$ then $j_*i_*(U)\xi\in R_\star^{\tau(t)}(G_t)((q))$. By lemma \ref{omegafactorsegal}, $j_*i_*(U)$ is, power-by-power in $q$, Segal induction. So an element $\xi=\sum_kM_kq^k$ is sent to $j_*i_*(U)\xi=\sum_k\iota_!M_k q^k$. By Lemma \ref{stalksofKhat}, $\xi$ defines an element of $R_\star^{\omega^*\tau(t)}(N_t)((q))$ if and only if $\gamma_N\cdot\xi=\sum_kM_k\chi_{M_k}(\gamma_N(q))q^k\in R^{\omega^*\tau(t)}(N_t)((q))$ for every $\gamma_N\in\text{Hom}(S^1,N_t)$. Now consider $\gamma_G\cdot (\sum_k\iota_!M_k q^k)=\sum_k\iota_!M_k\chi_{\iota_!M_k}(\gamma_G(q))q^k$ for $\gamma_G\in\text{Hom}(S^1,G_t)$. By the last statement of Lemma \ref{segalcharacter}, $\chi_{\iota_!M_k}(\gamma_G(q))=\chi_{M_k}(\gamma_G(q))$ because $\gamma_G(q)$ is conjugate to the maximal torus $T\subset G_t$. Hence $\omega^\star_* (U)$ is well-defined.

Now $\omega_*(U):=\psi_G^{-1}\omega^\star_*(U)\psi_N$ is well-defined and I claim it is actually independent of $\psi_N$ and $\psi_G$, as the notation suggests. Indeed, any other choice of fractional splitting of $\tilde{N}_t\rightarrow S^1$ is of the form $\psi'_N(q)=\psi(q)\gamma_N(q)$ for some $\gamma_N\in\text{Hom}(S^1,N_t)$, and the induced splitting $\psi'_G$ satisfies the same formula where $\gamma_N$ is replaced by $i_t\gamma_N$, its composite with the inclusion $i_t:N_t\rightarrow G_t$.
The claim follows by another application of the the formula $\chi_{\iota_!M_k}(\gamma_G(q))=\chi_{M_k}(\gamma_G(q))$, which implies that $\omega^\star_*(U)(\gamma_N\cdot\xi)=i_t\gamma_N\cdot\omega^\star_*(U)(\xi)$. 

It remains to show that the $\omega_*(U)$ patch together into a globally defined map, i.e. that it commutes with the restriction maps in $\vec{K}_{S^1}^*$-theory. Since locally contractible neighborhoods form a basis for the topology of $T/N$, $G/G$, their central extensions, and their $B\mathbb{Z}$-quotients, and all these groupoids have compact spaces of objects, it suffices to show that for an inclusion $j:V\hookrightarrow U$ of sufficiently small equivariantly locally contractible open neighborhoods the following diagram commutes
$$\begin{tikzcd}
    ^{\omega^*i_U^*\tau}\vec{K}_{S^1}^*(\omega^{-1}(U)/N)\arrow[r,"j^*"]\arrow[d,"\omega_*(U)"]&^{\omega^*i_V^*\tau}\vec{K}_{S^1}^*(\omega^{-1}(V)/N)\arrow[d,"\omega_*(V)"] \\
   R_\star^{\omega^*\tau(t)}(G_t)((q))=\ ^{i_U^*\tau}\vec{K}_{S^1}^*(U/G)\arrow[r,"j^*"]& ^{i_U^*\tau}\vec{K}_{S^1}^*(V/G)
\end{tikzcd}.$$
This is true power-by-power in $q$, since $j_*i_*$ is a globally defined map.
\end{proof}

\begin{lem}\label{segalkernel}
Let $G$ be a compact Lie group, $T\subset G$ a maximal torus with normalizer $N$. 
Suppose $W=N/T$ contains an Weyl reflection $r_\alpha$ defined by a root $\alpha$ of $G$. Then for any $M\in R(N)$ of virtual dimension 0 is in the kernel of the Segal induction map $\iota_!:R(N)\rightarrow R(G)$ (c.f. Definition \ref{segalind}).
\end{lem}
\begin{proof} Recall that the normalizer of the standard maximal torus of $SU(2)$ is Pin$_-(2)$. Recall that the root $\alpha$ induces maps $\delta_\alpha:\text{Pin}_-(2)\rightarrow N$ and $\Delta_\alpha: SU(2)\rightarrow G$ with finite kernel. Hence, if $r$ denotes the rank of $G$, there is a commutative diagram
$$\begin{tikzcd}
 T^{r-1}\times \text{Pin}_-(2)\arrow[r,"\delta_\alpha"]\arrow[d,hook,"\iota_\alpha"] &N\arrow[d,hook,"\iota"] \\
  T^{r-1}\times SU(2)\arrow[r,"\Delta_\alpha"]& G
\end{tikzcd}$$
Write $\epsilon\in R(\text{Pin}_-(2))$ for the sign representation $\epsilon$ of $C_2$ pulled back along the quotient $\text{Pin}_-(2)\rightarrow C_2$.
Since $M$ is in the augmentation ideal of $R(N)$ by hypothesis, and $[1-\epsilon]$ generates the augmentation ideal of $T^{r-1}\times \text{Pin}_-(2)$, $\delta_\alpha^*M$ is of the form $M'\otimes[1-\epsilon]$. I claim that such an element is sent to zero under Segal induction along $\iota_\alpha$. Indeed, it suffices to show that for any $V\in R(\text{Pin}_-(2))$, $V\otimes[1-\epsilon]$ is sent to zero by Segal induction along $\text{Pin}_-(2)\hookrightarrow SU(2)$. This follows immediately from the last sentence of Lemma \ref{segalcharacter} since $V\otimes[1-\epsilon]$ has trivial $T$-character and $SU(2)$ is connected. 

Now by [N] Lemma 4.3, $(\delta_\alpha)_!\delta_\alpha^*$ is multiplication by the $N$-equivariant Euler characteristic of the finite set coker$\delta_\alpha=W/\langle r_\alpha\rangle$. 
Since Segal induction is transitive, $(\Delta_\alpha)_!(\iota_\alpha)_!\delta_\alpha^*M=\iota_!(\delta_\alpha)_!\delta_\alpha^*M$. The left hand side is zero, and the right hand side is $|W/\langle r_\alpha\rangle|\iota_!M$. Since $|W/\langle r_\alpha\rangle|$ is nonzero and $R(G)$ is torsion free, $\iota_!M=0$.

\end{proof}

\subsection{Calculation of $^{\tau}\vec{K}^{*}_{S^1}(G/G)$ at Negative Twist}

\begin{lem}\label{abelianizednegative} Let $\tau\in H^3_{S^1}(T/N)$ be a negative twist. Equip $(\Lambda/\pi_{1}T)/W$ with the trivial $B\mathbb{Z}$-action (c.f. Definition \ref{BZdef}). There is a twist $\tau'\in H^3_{S^1}((\Lambda/\pi_{1}T)/W)$ (described in the proof) and an isomorphism of $\mathbb{Z}((q))$-modules
$$^{\tau} \vec{K}_{S^1}^{*}(T/N)\xrightarrow{\sim}\ ^{\tau'}\vec{K}_{S^1}^{*}((\Lambda/\pi_{1}T)/W).$$

\end{lem}

\begin{proof}
The plan is to invoke Lemma \ref{keylemma}, recognize the homotopy type of $\vec{P}_\tau$ as a Thom space, and apply the Thom isomorphism. 

I claim that because $\tau$ is negative, if the complement of a set $C\subset P_\tau$ contributes to the topology of $\vec{P}_{\tau}$ (c.f. Definition \ref{compactification}) then the intersection of $C$ with any connected component of $P_\tau$\footnote{Recall that these connected components are all homeomorphic to $\mathfrak{t}$ (c.f. Lemma \ref{ptau}).} has compact connected components. Indeed, by Condition 1 on the sets $C$ listed in Definition \ref{compactification} if such a component were not compact, its image in $T$ would contain a point $t$ such that $C\cap\pi^{-1}(t)$ contains points of the form $\{([X-p,\lambda,n)]\}$ for an infinite set of $p\in\pi_1T$ and some fixed $\lambda$ and $n$. But now consider the isomorphism $\pi^{-1}(t)\leftarrow\Lambda^{\tau(t)}\times\mathbb{Z}$ induced by the lift $X$ of $t$. The inverse of that map sends $[(X-p,\lambda,n)]$ to $(\lambda-\kappa^\tau p,n+\tau(\kappa^\tau p,\kappa^\tau p)+\lambda(p))$. Because $\tau$ is negative and there are infinitely many choices of $p$, that set certainly fails (the last part of) Condition 2 listed in Definition \ref{compactification}.

That leads to the following: define an action of $\pi_1T$ on $\mathfrak{t}\times\Lambda$ by $p(X,\lambda)=(X+p,\lambda-\kappa^\tau p)$. Let $(-)_+$ denote the one-point compactification and consider the spaces $\big((\mathfrak{t}\times_{\pi_{1}T}\Lambda)\times\mathbb{Z}_{\leq0}\big)_{+}$  and $\bigvee_{\mathbb{Z}_{>0}}\big(\mathfrak{t}\times_{\pi_{1}T}\Lambda\big)_{+}$. Fix and $l\in\Lambda$. For any $(X,\lambda)\in\mathfrak{t}\times\Lambda$ let $(X_l,\lambda_{l}$ be the $\pi_{1}T$-translate such that $\lambda_l$ that lies the fundamental domain containing $l$. Then the pointed map 
$$\Phi_l: \big((\mathfrak{t}\times_{\pi_{1}T}\Lambda)\times\mathbb{Z}_{\leq0}\big)_{+}\vee\bigvee_{n\in\mathbb{Z}_{>0}}\big((\mathfrak{t}\times_{\pi_{1}T}\Lambda)\times\{n\}\big)_{+}\rightarrow\vec{P}_{\tau}/T_\infty$$
$$([X,\lambda], n)\mapsto [(X_l, \lambda_{l}, n)]$$
is a $W$-equivariant homeomorphism. Hence the group $^{\vec{\pi}^*\tau}K^{*}(\vec{P}_{\tau}/W,T_\infty/W)$ can be computed as the $\Phi_l^*\vec{\pi}^*\tau$-twisted, reduced $W$-equivariant $K$-theory of the domain of $\Phi_l$. As an abelian group, that can be written suggestively in terms of $^{\Phi_l^*\vec{\pi}^*\tau}\widetilde{K}^{*}\big((\mathfrak{t}\times_{\pi_{1}T}\Lambda)_+/W\big)$ and a bookkeeping parameter presciently called $q$: 
$$^{\vec{\pi}^*\tau}K^{*}(\vec{P}_{\tau}/W,T_\infty/W)\xrightarrow{\Phi_l^*}\ ^{\Phi_l^*\vec{\pi}^*\tau}\widetilde{K}^{*}\big(((\mathfrak{t}\times_{\pi_{1}T}\Lambda)\times\mathbb{Z}_{\leq0})_{+}/W\big)\oplus  \bigoplus_{n<0}\ ^{\Phi_l^*\vec{\pi}^*\tau}\widetilde{K}^{*}\big((\mathfrak{t}\times_{\pi_{1}T}\Lambda)\times\{n\})_{+}/W\big)$$
$$\simeq\ ^{\Phi_l^*\vec{\pi}^*\tau}\widetilde{K}^{*}\big((\mathfrak{t}\times_{\pi_{1}T}\Lambda)_+/W\big)[[q]]\oplus\bigoplus_{n<0}\  ^{\Phi_l^*\vec{\pi}^*\tau}\widetilde{K}^{*}\big((\mathfrak{t}\times_{\pi_{1}T}\Lambda)_+/W\big)q^{-n}$$
$$\simeq\ ^{\Phi_l^*\vec{\pi}^*\tau}\widetilde{K}^{*}\big((\mathfrak{t}\times_{\pi_{1}T}\Lambda)_+/W\big)((q)).$$
Given that the $\mathbb{Z}((q))$-module structure of $^{\vec{\pi}^*\tau}K^{*}(\vec{P}_{\tau}/W,T_\infty/W)$ comes via pullback along the shift map sh, it is clear that the $\mathbb{Z}((q))$-module structure suggested by the last displayed abelian group is actually the correct one.

It now remains to define $\tau'$ and identify $^{\Phi_l^*\vec{\pi}^*\tau}\widetilde{K}^{*}\big((\mathfrak{t}\times_{\pi_{1}T}\Lambda)_+/W\big)((q))$ and $^{\tau'}\vec{K}_{S^1}^{*+\text{dim}T}((\Lambda/\pi_{1}T)/W)$ as $\mathbb{Z}((q))$-modules. Recall that a vector bundle induces a twist of the $K$-theory of the base of that vector bundle (c.f. [FHT1] 3.6). Note that $(\mathfrak{t}\times_{\pi_{1}T}\Lambda)_+$ is the Thom space of the vector bundle $\mathfrak{t}\times_{\pi_{1}T}\Lambda\rightarrow \Lambda/\pi_{1}T$, and call the corresponding twist $\sigma(\mathfrak{t})$. Then applying the Thom isomorphism (c.f. [FHT1] 3.6) in $W$-equivariant twisted $K$-theory gives
$$^{\Phi_l^*\vec{\pi}^*\tau}\widetilde{K}^{*}\big((\mathfrak{t}\times_{\pi_{1}T}\Lambda)_+\big)((q))\simeq\ ^{\Phi_l^*\vec{\pi}^*\tau-\sigma(\mathfrak{t})}\widetilde{K}^{*-\text{dim}\mathfrak{t}}\big(\Lambda/\pi_1T\big)((q)).$$
Finally, write $\tau'=\Phi_l^*\vec{\pi}^*\tau-\sigma(\mathfrak{t})$. Since $(\Lambda/\pi_{1}T)/W$ is a trivial $B\mathbb{Z}$-groupoid (c.f. Definition \ref{BZdef}) 
$$^{\tau'}\vec{K}_{S^1}^{*}((\Lambda/\pi_{1}T)/W) \simeq\ ^{\tau'}K^{*}((\Lambda/\pi_{1}T)/W)\otimes_{\mathbb{Z}}\mathbb{Z}((q)).$$
\end{proof}

\begin{cor} \label{singlecongsupp} At negative level $^{\tau} \vec{K}^{*}_{S^1}(T/N)$ is spanned by classes supported at single conjugacy classes.
\end{cor}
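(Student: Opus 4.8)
The plan is to trace classes from right to left through the chain of isomorphisms built in the proof of the preceding Lemma,
\[
{}^\tau \hat{K}^{\bullet}_{S^1}(T:N)\ \xrightarrow{\ \sim\ }\ {}^{\tau'}\widetilde{K}^{\bullet}(\vec{P}_{\tau}:W)\ \xrightarrow{\ \sim\ }\ {}^{\tau'}\hat{K}_{S^1}^{\bullet+\dim T}(\Lambda^{\tau}/\pi_{1}(T):W),
\]
keeping track of supports over the coarse space $T/W$ at every stage. First, since $\tau$ is negative it is non-degenerate, so $\kappa^{\tau}$ is injective with finite cokernel and $\Lambda^{\tau}/\pi_{1}(T)$ is a finite discrete $W$-set; hence the rightmost group decomposes as $\bigoplus_{O}{}^{\tau'}\hat{K}_{S^1}^{\bullet}(\text{pt}:W_{O})$, the sum running over the $W$-orbits $O\subset\Lambda^{\tau}/\pi_{1}(T)$, and in particular it is spanned by classes each supported at a single point $[\lambda]$ of $\Lambda^{\tau}/\pi_{1}(T)$.

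Next I would trace such a generator leftward. The second isomorphism is the $W$-equivariant Thom pushforward along $\mathfrak{t}$, composed with the homeomorphism $\Phi_{l}$ identifying $\vec{P}_{\tau}/T_{\infty}$ with a wedge of fibrewise one-point compactifications of $E:=\mathfrak{t}\times_{\pi_{1}}\Lambda^{\tau}\to\Lambda^{\tau}/\pi_{1}(T)$. A priori $E$ is only an affine bundle over the finite set $\Lambda^{\tau}/\pi_{1}(T)$, the $\pi_{1}$-action on the $\mathfrak{t}$-factor being by translations; it acquires a vector-bundle structure, and hence a Thom class $u$, via the canonical section $[\lambda]\mapsto[(-(\kappa^{\tau})^{-1}\lambda,\,\lambda)]$, where $(\kappa^{\tau})^{-1}$ exists because $\tau$ is non-degenerate. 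The inverse Thom isomorphism sends a class supported at $[\lambda]$ to $u\cdot\pi^{*}(\text{that class})$, which, since $u$ vanishes off the zero section, is supported over the image under $\vec{\pi}$ of the zero section of $E$ above the $W$-orbit of $[\lambda]$. That image is a single point of $T/W$---one of a finite set of torsion conjugacy classes determined by $\tau$---so the resulting class in ${}^{\tau'}\widetilde{K}^{\bullet}(\vec{P}_{\tau}:W)$ is supported over one point of $T/W$ and restricts to zero over the complementary open subgroupoid.

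Finally, the first isomorphism was established in the Lemma by a Mayer--Vietoris argument on $T$: it is verified after restriction over the points of $T$, so it is compatible with restriction to the open subgroupoids lying above open subsets of $T/W$ and therefore preserves supports. Applying it to the classes above exhibits ${}^{\tau}\hat{K}^{\bullet}_{S^1}(T:N)$ as spanned by classes each supported at a single conjugacy class. The steps I expect to require the most care are identifying the correct zero section of the a priori affine bundle $E$---which is what determines which conjugacy classes occur---and checking that the composite isomorphism is genuinely local over $T/W$ so that supports are preserved; granted these, the corollary is then immediate from the fact that a Thom class vanishes away from its zero section. (The argument is special to negative twisting: for positive $\tau$ the partial compactification of $P_{\tau}$ must accommodate cocycles with unbounded components in their support, so the wedge description of $\vec{P}_{\tau}/T_{\infty}$ breaks down---exactly the asymmetry flagged before the statement.)
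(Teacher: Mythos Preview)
Your proposal is correct and follows exactly the paper's approach: the paper's entire proof is the single clause ``tracing each class right-to-left in the isomorphism of the previous Lemma implies immediately,'' and you have carried out that trace explicitly, keeping track of supports through the Thom pushforward and the Mayer--Vietoris-local first isomorphism. Your added detail about the zero section of the affine bundle $E$ and the compatibility of the isomorphisms with restriction over $T/W$ is precisely the content the paper leaves implicit; note only that $(\kappa^{\tau})^{-1}\lambda$ should be read as an element of $\mathfrak{t}$ rather than of $\pi_{1}(T)$, since $\kappa^{\tau}$ need only be injective integrally.
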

\begin{proof}
This is immediate from the proof of Lemma \ref{abelianizednegative}, since the classes in $^{\tau_w} \vec{K}^{*}_{S^1}(T/N)$ are all images of degree zero classes under a Thom isomorphism from a vector bundle whose dimension is equal to the dimension of $T$. 
\end{proof}

\begin{lem}\label{negfinalcalc} Let $G$ be a compact Lie group with identity component $G(1)$. Let $\Lambda_\text{reg}$ be the regular weights of $G$, i.e. those on which are not fixed by a Weyl reflection $r\in W=N/T$. For negative twists $\tau\in H^3_{S^1}(G/G)$, there is a twist $\tau'\in H^3_{S^1}(\Lambda_{\text{reg}}/\pi_{1}T/W)$ (described in the proof) such that $\omega^*$ induces an isomorphism $$\omega^*:\ ^{\tau}\vec{K}^{*}_{S^1}(G(1)/G)\xrightarrow{\sim}\ ^{\tau'}\vec{K}_{S^1}^{*}(\Lambda_{\text{reg}}/\pi_{1}T/W)\subset\ ^{\tau'}\vec{K}_{S^1}^{*}((\Lambda/\pi_{1}T)/W).$$ 
\end{lem}
\begin{proof} 
As usual, consider a point $t\in T$ with stabilizer $G_t\subset G$. Write $T_t(=T)$ for a maximal torus of $G_t$ and let $N_t\subset N$ be the normalizer of $T_t$ in $G_t$. Write $\tau(t)$ for the restriction of $\tau$ to $\{t\}/G_t$. Note that $\omega:T/N\rightarrow G(1)/G$ is essentially surjective. By the definition of $\omega$ (c.f. Lemma \ref{omegapush}) it suffices to work at a fixed $q$-power. By the same Lemma, the local behavior of $\omega$, denoted $(\omega_t)_*$, is the $G_t$-equivariant Becker-Gottlieb transfer along $G_t/N_t\rightarrow\text{pt}$. By [N] Lemma 4.3 the composition $(\omega_t)_{*}\omega_t^{*}$ is multiplication by the $G_t$-equivariant Euler characteristic $\chi(G_t/N_{t})=|N_t|/|N_{t}|=1$. So the map $\omega_{*}\omega^{*}$ is locally the identity. By the Mayer-Vietoris axiom (and equivariant local contractibility of all groupoids involved) it follows that it is a global isomorphism. Then, in light of Corollary \ref{singlecongsupp}, the image of any class under $\omega_{*}$ is fixed by $\omega_{*}\omega^{*}$, so $\omega_{*}\omega^{*}$ is a projection, and so must be the identity. Therefore $^{\tau}\vec{K}^{*}_{S^1}(G/G)$ is split inside $^{\omega^*\tau} \vec{K}^{*}_{S^1}(T/N)$. 

It remains to identify the summand. For that it suffices to identify the kernel of $\omega_*$ with the classes supported away from regular conjugacy classes. Recall from Corollary \ref{singlecongsupp} that $^{\omega^*\tau} \vec{K}^{*}_{S^1}(T/N)$ is spanned by classes supported at single conjugacy classes. Moreover, for any $t\in T$ all classes supported at $\{t\}/N_t\subset T/N$ are of the form $[V\otimes \Theta_t]$ where $V\in R^{\omega^*\tau(t)}(N_{t})$ and $\Theta_t$ represents the euler class of the normal bundle of $\{t\}/N_{t}\hookrightarrow T/N$, which depends on a choice of complex orientation but is always a class of virtual dimension zero in $R(N_t)$. Finally, under the identification of $^{\omega^*\tau} \vec{K}^{*}_{S^1}(T/N)$ with $^{\tau'}\vec{K}_{S^1}^{*}(\Lambda/\pi_{1}T/W$, the subgroup $^{\tau'}\vec{K}_{S^1}^{*}(\Lambda_{\text{reg}}/\pi_{1}T/W)$ corresponds to classes supported at $t\in T$ that define regular conjugacy class in $G$ (i.e. $G_t=N_t$). Recall that $(\omega_t)_*$ is given by Segal induction, power-by-power in $q$ (c.f. the proof of Lemma \ref{omegapush}). Clearly if $G_t=N_t$ then $(\omega_t)_{*}$ is the identity. If $t$ does not define a regular conjugacy class in $G$, Then $W_t=N_t/T_t$ contains a Weyl reflection, and the proof is reduced to Lemma \ref{segalkernel}.
\end{proof}

\subsection{Calculation of $^{\tau}\vec{K}^{*}_{S^1}(G/G)$ at Positive Twist}

\begin{lem}\label{abelianizedpositive}
Let $\tau\in H^3_{S^1}(T/N)$ be a positive twist. Equip $(\Lambda/\pi_{1}T)/W$ with the trivial $B\mathbb{Z}$-action (c.f. Definition \ref{BZdef}). There is a twist $\tau'\in H^3_{S^1}((\Lambda/\pi_{1}T)/W)$ (described in the proof) and an isomorphism of $\mathbb{Z}((q))$-modules
$$^{\tau} \vec{K}_{S^1}^{*}(T/N)\xrightarrow{\sim}\ ^{\tau'}\vec{K}_{S^1}^{*}((\Lambda/\pi_{1}T)/W).$$

\end{lem}
\begin{proof} The proof is similar to that of Lemma \ref{abelianizednegative}. The plan is to invoke Lemma \ref{keylemma}, recognize the homotopy type of $\vec{P}_\tau$ as the disjointly base-pointed total space of a vector bundle over $(\Lambda/\pi_{1}T)/W$, and then equivariantly contract the fibers of that vector bundle and discard the disjoint base-point.

I claim that because $\tau$ is positive, the complement of any closed, connected, convex set $C\subset P_\tau$ contributes to the topology of $\vec{P}_{\tau}$ (c.f. Definition \ref{compactification}). It suffices to consider the case that $C$ is the image of $\mathfrak{t}\times\{\lambda\}\times\{n\}$ for some $(\lambda,n)$. This certainly satisfies Condition 1 listed in Definition \ref{compactification}. For any $t\in T$, and lift $X$ of $t$, the inverse of the induced map $\pi^{-1}(t)\leftarrow\Lambda^{\tau(t)}\times\mathbb{Z}$ sends $[(X-p,\lambda,n)]$ to $(\lambda-\kappa^\tau p,n+\tau(\kappa^\tau p,\kappa^\tau p)+\lambda(p))$, whose second coordinate is a positive quadratic function of $p$ and so certainly satisfies Condition 2. 

It follows that for any bounded below $S\subset \mathbb{Z}$, the image of $\mathfrak{t}\times\{\lambda\}\times S$ in $P_\tau$ satisfies Conditions 1 and 2. This leads to the following.

Let $\vec{\mathbb{Z}}$ denote the following partial compactification of $\mathbb{Z}$: as a set $\vec{\mathbb{Z}}=\mathbb{Z}\coprod\{\infty_+\}$ and the open neighborhoods of $\{\infty_+\}$ are defined to be the complements of sets $C\in\mathbb{Z}$ which are bounded below. As in the proof of Lemma \ref{abelianizednegative}, define an action of $\pi_1T$ on $\mathfrak{t}\times\Lambda$ by $p(X,\lambda)=(X+p,\lambda-\kappa^\tau p)$. Let $\big(\mathfrak{t}\times_{\pi_{1}T}\Lambda\coprod \{*\}\big)$ be the evident addition of a disjoint basepoint with trivial $W$-action. For any $(X,\lambda)\in\mathfrak{t}\times\Lambda$ let $(X_l,\lambda_{l}$ be the $\pi_{1}T$-translate such that $\lambda_l$ that lies the fundamental domain containing $l$. Then the pointed map 
$$\Phi_{l}: \big(\mathfrak{t}\times_{\pi_{1}T}\Lambda^{\tau}\coprod \{*\}\big)\wedge\vec{\mathbb{Z}}\rightarrow\vec{P}_{\tau}/T_{\infty}$$ 
$$([X,\lambda], n)\mapsto [\nu_{l}(X), \lambda_{l}, n]$$
is a $W$-equivariant homeomorphism. Hence $^{\vec{\pi}^*\tau}\widetilde{K}^{*}(\vec{P}_{\tau}/W)\simeq\ ^{\Phi_l^*\vec{\pi}^*\tau}K^{*}\big((\mathfrak{t}\times_{\pi_{1}T}\Lambda^{\tau})/W\big)\otimes_{\mathbb{Z}}\mathbb{Z}((q))$. The proof of the isomorphism displayed in the lemma is completed by defining $\tau'=\Phi_l^*\vec{\pi}^*\tau$, noting that $\mathfrak{t}\times_{\pi_{1}T}\Lambda^{\tau}$ equivariantly deformation retracts onto $\Lambda/\pi_{1}T$, and that (as in the proof of Lemma \ref{abelianizednegative})
$$^{\tau'}\vec{K}_{S^1}^{*}((\Lambda/\pi_{1}T)/W) \simeq\ ^{\tau'}K^{*}((\Lambda/\pi_{1}T)/W)\otimes_{\mathbb{Z}}\mathbb{Z}((q)).$$
\end{proof}

\begin{defn}\label{PERdef} ([PS] Chapter 9)
For a compact Lie group $G$ and an element $\tau\in H^3_{S^1}(LBG)$ write $LG^\tau\rtimes S^1\rightarrow LG\rtimes S^1$ for the associated central $U(1)$-extension, and write $\hat{R}_{\text{pos}}^{\tau}(LG\rtimes S^1)$ for the free $\mathbb{Z}((q))$-module consisting of positive energy, $\tau$-projective (also know as `level $\tau$') representations of $LG\rtimes S^1$.
\end{defn}

\begin{lem}\label{PERclasses} For positive twists $\tau$ there is an injection of $\mathbb{Z}((q))$-modules $\hat{R}_{\text{pos}}^{\tau}(LG\rtimes S^1)\hookrightarrow\ ^{\tau} \vec{K}_{S^1}^{0}(G(1)/G)$.
\end{lem}
\begin{proof} Let $\mathcal{H}\in\hat{R}_{\text{pos}}^{\tau}(LG\rtimes S^1)$ be an irreducible level $\tau$ positive energy representation. Let $S\subset  \Lambda\times\mathbb{Z}$ be the weights of $T\times S^1\subset LG\rtimes S^1$ appearing in $\mathcal{H}$. By [PS] Theorem 9.3.5 this has a unique lowest weight $(\lambda, n)\in S$. The affine Weyl group $W\ltimes\pi_1T$ acts on $S$, and the subgroup $\pi_1T$ acts by the first displayed formula in the proof of Lemma \ref{ptau} (c.f. formula (9.3.3) in [PS]). Hence, since $\tau$ is positive, for any $t\in G$ with stabilizer $G_t$ the infinite dimensional (projective) representation $\mathcal{H}$ produces a well-defined element of $^{\tau(t)}\vec{K}_{S^1}^*(\{t\}/G_t)$ (c.f. Lemma \ref{stalksofKhat}). Then since $(G(1)/G)/B\mathbb{Z}\simeq\mathcal{A}(G)/(LG\rtimes S^1)$ ([FHT2], Section 2.1), the construction
$$\mathcal{A}(G)\times_{(LG\rtimes S^1)}\mathcal{H}_{\lambda}$$
defines the required map. It is injective since the pullback along the inclusion $1\hookrightarrow G$ detects the character of $\mathcal{H}_{\lambda}$ as an element of $R^{\tau(1)}(G)((q))\simeq\ ^{\tau(1)} \vec{K}_{S^1}^{0}(\text{pt}/G)$ (c.f. Lemma \ref{stalksofKhat}).
\end{proof}

\begin{defn}\label{qdim0}
Let $\mathbf{X}$  be a $B\mathbb{Z}$-groupoid let $\tau\in H^3_{S^1}(\mathbf{X})$ be a twist. A class $\xi\in\ ^\tau\vec{K}^*_{S^1}\mathbf{X}$ is said to have \textit{virtual $q$-dimension 0} if for any point $x\in X_0$ with stabilizer $G_x$, pullback along $i_x:\{x\}/G_x\rightarrow\mathbf{X}$ produces a class $i_x^*\xi\in\ ^{i_x^*\tau}\vec{K}_{S^1}^*(\{x\}/G_x)$ which maps to zero under the restriction map $^{i_x^*\tau}\vec{K}_{S^1}^*(\{x\}/G_x)\rightarrow \vec{K}_{S^1}^*(\{x\})=\mathbb{Z}((q))$ induced by $1\rightarrow G_x$ (c.f. Lemma \ref{stalksofKhat}).
\end{defn}

\begin{lem}\label{qdim0ker} Let $\tau$ be a positive twist. Identify $^{\tau} \vec{K}^{*}_{S^1}(T/N)$ with $^{\tau'}\vec{K}_{S^1}^{*}((\Lambda/\pi_{1}T)/W)$ as in the previous lemma. Then the kernel of the map 
$$\omega_{*}:\ ^{\tau'}\vec{K}_{S^1}^{*}((\Lambda/\pi_{1}T)/W)\longrightarrow\ ^{\tau} \vec{K}_{S^1}^{*}(G/G)$$ consists of the classes of virtual $q$-dimension 0 (c.f. Definition \ref{qdim0}).
\end{lem}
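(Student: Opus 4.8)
The plan is to imitate the proof of the negative-twisting computation of $^{\tau}\hat{K}^{\bullet}_{S^1}(G:G)$ — localize $\omega_*$ over the conjugacy classes of $G$ and read off the kernel one stratum at a time — while tracking the one feature that positivity of $\tau$ changes. At negative twisting a class supported near a conjugacy class is a product $[V\otimes S^{\pm}]$ of a bundle on the stratum with the Thom class $S^{\pm}$ of its normal bundle, and it is precisely that Thom class which makes every singular class die; at positive twisting the support conditions are reversed (as in the proof of the previous lemma, where the relevant $K$-theory is computed \emph{without} compact supports), so a local class carries no normal Thom factor, and the net effect is to shrink the killed subspace from ``everything supported on the singular locus'' down to ``everything of virtual dimension $0$.''

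First I would set up the Mayer--Vietoris reduction on $G$. Both $\omega_*$ and the virtual-dimension homomorphism — sending a class in $^{\tau'}\hat{K}_{S^1}^{\bullet}(\Lambda^{\tau}/\pi_{1}(T):W)$ to its underlying $W$-invariant rank function on $\Lambda^{\tau}/\pi_{1}(T)$, valued in $\mathbb{Z}((q))$ — are compatible with restriction to a $G$-neighborhood of a conjugacy class $[t]$; under the identification of the previous lemma such a neighborhood corresponds to a $W$-neighborhood of the weight orbit $[\lambda_{t}]\subset\Lambda^{\tau}/\pi_{1}(T)$, whose generators are the classes of the positive-energy $LT$-representations, that is, the $\tau'$-twisted representations of the stabilizer $W_{t}$. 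So it suffices to compute each local kernel and to check the gluing statement that a global class lies in $\ker\omega_*$ iff each of its local restrictions does; the latter is routine from the Mayer--Vietoris spectral sequence once the local kernels are known.

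Next, the local model and the regular strata. As before, near $[t]$ the map $\omega$ is modelled by $G\times_{N_{t}}\mathfrak{t}\to\mathfrak{g}$, and $\omega_*$ factors as the $N_{t}$-equivariant Thom pushforward along $\mathfrak{g}/\mathfrak{t}$ followed by Dirac induction $N_{t}\to G$; the Thom class cancels the stable normal bundle of $G/N_{t}$, and by [N] Theorem 5.2 the composite is the $\tau$-twisted, $\mathbb{Z}((q))$-completed Segal induction. At a regular $[t]$ one has $N_{t}=N$, and $\omega_*$ sends the generator of the corresponding free $W$-orbit to the associated positive-energy $LG$-representation, which is nonzero — its Kac character is nonzero — and by the argument of the lemma establishing $\hat{R}_{\text{pos}}^{\tau}(S^{1}\ltimes LG)\hookrightarrow{}^{\tau}\hat{K}_{S^1}^{0}(G:G)$ (restriction along $1\hookrightarrow G$, landing in $R(G^{\tau})((q))$) distinct regular orbits go to $\mathbb{Z}((q))$-linearly independent classes. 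Hence the local kernel at a regular stratum is $0$, which matches the virtual-dimension-$0$ subspace there: that is also $0$, since a nonzero element of the rank-one module $\mathbb{Z}((q))$ has nonzero virtual dimension.

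The crux is the singular strata. At a singular $[t]$ a local class is just a $\tau'$-twisted $W_{t}$-representation, with no Thom factor forcing divisibility by the Euler class $1-\epsilon$ of a reflection direction — this is exactly where completing in positive $q$-powers breaks the symmetry with the negative computation. Choosing a reflection $\alpha\in W_{t}$, the local $\omega_*$ factors through the Segal induction $R(\mathbb{T}_{\alpha}\ltimes\mathbb{Z}_{2})\to R(SU(2)_{\alpha})$, whose kernel is generated by $1-\epsilon$; restricting to $R(\mathbb{Z}_{2})$ this forces the $\langle\alpha\rangle$-restriction of any kernel class to have vanishing virtual dimension, and running over all reflections in $W_{t}$ — reducing the higher-rank stabilizers to their rank-one subgroups just as in the negative case — identifies the full local kernel with the virtual-dimension-$0$ part of $R^{\tau'}(W_{t})\otimes\mathbb{Z}((q))$. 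Assembling the strata via the gluing step then gives $\ker\omega_*$ equal to the span of the virtual-dimension-$0$ classes. I expect this singular step to be the main obstacle: one must verify that the kernel of twisted Segal/Dirac induction out of an arbitrary reflection stabilizer is exactly its augmentation ideal — neither larger nor smaller — and that the Thom pushforwards together with the convergence condition $(\star)$ interact correctly, so that the finite-dimensional local models really do compute $\omega_*$ on the completed $\hat{K}_{S^1}$-classes. A slicker-looking alternative — deducing the result from the negative-twisting computation by adjunction under the cup-product pairing, identifying $\ker\omega_*$ at level $\tau$ with the annihilator of the image of $\omega^{*}$ at level $-\tau$ — founders on the fact that the $+\tau$/$-\tau$ pairing is not perfect, again because of the completion asymmetry, so that failure would have to be controlled by hand; this seems no easier than the direct argument.
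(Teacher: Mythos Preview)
Your overall strategy---localize over conjugacy classes and compute the kernel stratum by stratum---matches the paper's, but the singular-stratum step has a genuine gap, and the paper takes a different route there. The problematic claim is that at a singular $[t]$ the local $\omega_*$ ``factors through the Segal induction $R(\mathbb{T}_\alpha\ltimes\mathbb{Z}_2)\to R(SU(2)_\alpha)$.'' In the negative-twisting proof that factoring worked only because every local class carried the Thom factor $S^{\pm}$, which itself decomposes as $V'\otimes(1-\epsilon)$; it was the Thom class, not Segal induction per se, that supplied the $(1-\epsilon)$. You correctly observe that at positive level the Thom factor is absent---but then the factoring is simply no longer available, and your identification of the local kernel with the augmentation ideal collapses. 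Segal induction $\iota_!:R(N_t)\to R(G)$ does not factor through a rank-one induction once the Thom class is gone, and your restriction-to-$\mathbb{Z}_2$ maneuver does not interact with $\iota_!$ in the way you need.

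The paper replaces the rank-one trick by the Atiyah--Bott fixed-point formula: for $x\in T$ one has $\chi_{\iota_!(M)}(x)=\sum_{gN_t\in\mathrm{Fix}(x)}\chi_M(g^{-1}xg)$, and each $g^{-1}xg$ lies in $T$, so the kernel of the restriction $R(N_t)\to R(T)$ is contained in $\ker\iota_!$. A virtual-dimension-$0$ class, traced back through Lemma~3.2.2, has $S^1$-eigenspaces of the form $\mathrm{Ind}_T^{N_t}(L_\mu)\otimes(V_+-V_-)$ with $V_{\pm}$ pulled back along $N_t\to W_\lambda$; since $T$ acts trivially on $V_{\pm}$ and $\dim V_+=\dim V_-$, the virtual $T$-character vanishes and $\iota_!$ kills it. This gives the containment $\{\text{virtual dim }0\}\subseteq\ker\omega_*$; the paper then passes from local vanishing to global vanishing not by Mayer--Vietoris (your gluing step is too optimistic---positive-level classes are global Hilbert bundles, not compactly supported) but by invoking the free $\mathbb{Z}((q))$-basis of Lemma~3.2.2 and local injectivity of $\omega^*$, noting that the only class locally zero at some point is zero. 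The reverse containment $\ker\omega_*\subseteq\{\text{virtual dim }0\}$, which you try to prove directly, is in the paper obtained only afterward by the rank count coming from Lemma~3.2.1 and Theorem~3.2.4; your attempt to get it locally would need a genuinely new argument.
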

\begin{proof} For $t\in N$ write $N_t$ for its stabilizer in $N$, $G_t$ for its stabilizer in $G$, and $T_t$ for a maximal torus of $G_t$. Recall from the proof of Lemma \ref{omegapush} that $\omega_*$ is defined power-by-power in $q$, and the local model at a point $t\in G$ with stabilizer $G_t\in G$ and  coincides with Segal induction $\iota_{!}:R(N_{t})\rightarrow R(G_t)$. 

Now let $[V]\in\ ^{\tau}\vec{K}_{S^1}^{*}((\Lambda/\pi_{1}T)/W)$ be a class of virtual $q$-dimension 0. Then its support must be contained in the set of $[\lambda]\in\Lambda/\pi_{1}T$ with nontrivial $W$-stabilizer. Over each such point its fiber is a representation of the stabilizer of virtual $q$-dimension 0. It suffices to assume that $[V]$ is supported at a single such point $[\lambda]$ with stabilizer $W_\lambda$. Tracing $[V]$ backwards through the construction of Lemma \ref{abelianizedpositive}, there is a point $t\in T$ with stabilizer $N_{t}$ such that the class in $^{\tau} \vec{K}^{*}_{S^1}(T/N)$ corresponding to $[V]$ is represented by a Hilbert bundle $\mathcal{V}$ whose fiber at $t$ can be identified (by a choice of fractional splitting $\psi:\tilde{N}_{t}\leftarrow S^1_d$) with a Laurent series in $q$ with coefficients in ($\tau$-projective) virtual representations of virtual dimension 0. Since $(\omega_t)_*$ is defined power-by-power, Lemma \ref{segalkernel} implies that $(\omega_t)_*\mathcal{V}_t$ has $q$-dimension zero at $t$. Hence $\omega_*[V]$ and $\omega^*\omega_*[V]$ have $q$-dimension zero.
But from the free $\mathbb{Z}((q))$-basis of $^{\tau} \vec{K}^{*}_{S^1}(T/N)$ established in Lemma \ref{abelianizedpositive}, it is evident that the only class which is of $q$-dimension 0 at any point is the zero class, So $\omega^*\omega_*[\mathcal{V}]=0$. Injectivity of $\omega^{*}$ implies that $\omega_{*}[\mathcal{V}]=0$. It remains to prove that the classes of virtual dimension zero span the kernel of $\omega_{*}$. By Lemma \ref{PERclasses}, the image of $\omega^{*}$ contains a subspace isomorphic to $\hat{R}_{\text{pos}}^{\tau}(LG\rtimes S^1)$. Since $\omega_{*}\omega^{*}$ is an isomorphism (c.f. the proof of Lemma \ref{negfinalcalc}), the proof is completed by a counting argument using the following result.
\end{proof}

\begin{thm}(c.f. [PS] Theorem 9.3.5, [FHT3] Theorem 10.2) A free $\mathbb{Z}((q))$-basis of $\hat{R}_{\text{pos}}^{\tau}(LG\rtimes S^1)$ consisting of irreducibles is in one-to-one correspondence with the set of orbits of the action of the affine Weyl group $W\ltimes\pi_1T$ on $\Lambda$ defined by $\tau$ (c.f. Lemma \ref{ptau}), i.e. the coarse quotient $[(\Lambda^{}/\pi_{1}T)/W]$.
\end{thm}
\noindent
The preceding two results immediately imply the following.
\begin{cor}\label{posfinalcalc} For positive $\tau$, $^{\tau} \vec{K}_{S^1}^{*}(G/G)\simeq\hat{R}_{\text{pos}}^{\tau}(LG\rtimes S^1)$, where the right hand side is viewed as a graded $\mathbb{Z}((q))$-module concentrated in degree zero.
\end{cor}

\subsection{Reconciliation of Bases}

\begin{defn}\label{spinors}
Let $G$ be a compact Lie group. Let $\text{Cliff}(\mathfrak{g}^*)$ denote the Clifford algebra of the vector space $\mathfrak{g}^*$ equipped with the bilinear Killing form. By integrating over $S^1$, the Killing form induces a bilinear form on $L\mathfrak{g}^*=C^\infty(S^1,\mathfrak{g}^*)$. Let $\text{Cliff}(L\mathfrak{g}^*)$ denote the associated Clifford algebra, and let $\mathcal{S}^\pm(L\mathfrak{g}^*)$ be an irreducible $\mathbb{Z}/2$-graded representation of $\text{Cliff}(L\mathfrak{g}^*)$. If $\mathcal{S}^\pm(0)$ is an irreducible $\mathbb{Z}/2$-graded representation of $\text{Cliff}(\mathfrak{g}^*)$ then $\mathcal{S}^\pm(L\mathfrak{g}^*)$ may be presented as $\mathcal{S}^\pm(0)\otimes \bigwedge^\text{ev/odd}(z\mathfrak{g}_\mathbb{C}[z])$ (c.f. [FHT3] 8.6). It admits a projective, positive energy action of $LG$, and the corresponding level is denoted by $\sigma$ (c.f. [FHT3] 1.6, 8.8).
\end{defn}

While it is not curious that the calculation at positive and negative twist differ\footnote{See the discussion at the beginning of Section 3.} it is curious \textit{how} they differ. At positive level the representation ring $\hat{R}_{\text{pos}}^{\tau}(LG\rtimes S^1)$ makes an appearance as $^{\tau} \vec{K}_{S^1}^{0}(G/G)\simeq\ ^{\tau} \vec{K}_{S^1}^{\text{dim}G}(\Lambda^{\tau}/W)$, while at negative level, the isomorphic ring $\hat{R}_{\text{neg}}^{-\tau}(LG\rtimes S^1)$ makes its appearance as $^{\tau+\sigma} \vec{K}_{S^1}^{\text{dim}G}(G/G)\simeq\ ^{\tau+\sigma} \vec{K}_{S^1}^{\text{dim}G}(\Lambda^{\tau+\sigma}_{\text{reg}}/W)$. Thus at positive level there is no shift in twist and all $W$-orbits contribute a basis element, while at negative level there is a shift by $\sigma$ in the twist and only the \textit{regular} orbits contribute basis elements. The correspondence becomes even more curious when related to the correspondence for the maximal torus, where there is no $\sigma$-discrepancy between positive and negative twists. 

This is made precise in the following lemma.

\begin{lem}\label{duality} Let $G$ be a simple and simply-connected Lie group. Fix a positive twist $\tau\in H^3_{S^1}(G/G)$. There are dualities of finitely-generated free (ungraded) $\mathbb{Z}((q))$-modules
$$\mathbb{D}_T: \ ^{-\tau-\sigma}\vec{K}_{S^1}^{r}(T/N)\otimes\ ^{\tau+\sigma}\vec{K}_{S^1}^{0}(T/N)\longrightarrow \mathbb{Z}((q)),$$
$$\mathbb{D}_G: \ ^{-\tau-\sigma} \vec{K}_{S^1}^{r}(G/G)\otimes\ ^{\tau}\vec{K}_{S^1}^{0}(G/G)\longrightarrow\mathbb{Z}((q)).$$
\end{lem}
\begin{proof} 
By Corollary \ref{posfinalcalc}, the map $\hat{R}_{\text{pos}}^{\tau}(LG\rtimes S^1)\hookrightarrow\ ^{\tau} \vec{K}_{S^1}^{0}(G/G)$ in Lemma \ref{PERclasses} is an isomorphism. The content of Section 12 and 13 of [FHT3] (c.f. 12.9, Proposition 13.6) is that for each $[\mathcal{H}_\lambda]\in\hat{R}_{\text{pos}}^{\tau}(LG\rtimes S^1)$ there is a $G$-parametrized family $F_\lambda$ of Fredholm operators on $\mathcal{H}_\lambda\otimes\mathcal{S}^\pm$ such that the assignment $[\mathcal{H}_\lambda]\mapsto (\mathcal{H}_\lambda\otimes\mathcal{S}^\pm,F_\lambda)$ defines an isomorphism $\hat{R}_{\text{pos}}^{\tau}(LG\rtimes S^1)\hookrightarrow\ ^{-\tau-\sigma} K_{S^1}^{r}(G/G)$. By Lemma \ref{negfinalcalc} the natural map $\ ^{-\tau-\sigma} K_{S^1}^{r}(G/G)\rightarrow\ ^{-\tau-\sigma}\vec{K}_{S^1}^{r}(G/G)$ is an isomorphism.
Therefore the span of isomorphisms of finite rank, free $\mathbb{Z}((q))$-modules
$$^{-\tau-\sigma}\vec{K}_{S^1}^{r}(G/G)\leftarrow\hat{R}_{\text{pos}}^{\tau}(LG\rtimes S^1)\rightarrow\ ^{\tau} \vec{K}_{S^1}^{0}(G/G)$$
exhibits a duality $\mathbb{D}_G$ between the left and right terms, since the middle is canonically self-dual (it has a canonical basis of irreducibles).

Let $\hat{R}_{\text{pos}}^{\tau}(LT\rtimes S^1)^{hW}$ be the $\mathbb{Z}((q))$-module of isomorphism classes of  $W$-\textit{equivariant}\footnote{As opposed to \textit{invariant}.} $\tau$-twisted positive energy $LT$ representations. The same recipe as in the previous paragraph produces a span of isomorphisms of finite rank, free $\mathbb{Z}((q))$-modules
$$^{-\tau-\sigma}\vec{K}_{S^1}^{r}(T/N)\leftarrow\hat{R}_{\text{pos}}^{\tau}(LT\rtimes S^1)^{hW}\rightarrow\ ^{\tau+\sigma} \vec{K}_{S^1}^{0}(T/N)$$
exhibits a duality $\mathbb{D}_T$ between the left and right terms, since the middle is again canonically self-dual. 
\end{proof}

The reconciliation comes in the form of the following lemma, which is preceded by the following definition.

\begin{lem} Let $G$ be a simple and simply-connected Lie group. Fix a positive twist $\tau\in H^3_{S^1}(G/G)$. The dualities of the previous lemma are intertwined by maps
$$\omega_{*}:\ ^{-\tau-\sigma}\vec{K}_{S^1}^{r}(T/N)\rightarrow\ ^{-\tau-\sigma} \vec{K}_{S^1}^{r}(G/G),$$
$$\omega^{!}:\ ^{\tau}\vec{K}_{S^1}^{0}(G/G)\rightarrow\ ^{\tau+\sigma}\vec{K}_{S^1}^{0}(T/N).$$
The first map is the pullback along $\omega: T/N\hookrightarrow G/G$ followed by tensoring with the spinor representation $\mathcal{S}^{\pm}(L\mathfrak{g}^*)$ (c.f. Definition \ref{spinors}). The second is the Becker-Gottlieb transfer along $\omega$.
\end{lem}
\begin{proof} The claim is that for any $a\in\ ^{-\tau-\sigma}\vec{K}_{S^1}^{r}(T/N)$ and $b\in\ ^{\tau}\vec{K}_{S^1}^{0}(G/G)$, $\mathbb{D}_G(\omega_*a\otimes b)=\mathbb{D}_T(a\otimes\omega^!b)$. Since $\omega_*\omega^*$ is the identity at negative level (c.f. the proof of Lemma \ref{negfinalcalc}), it suffices to show that for any $c\in\ ^{-\tau-\sigma}\vec{K}_{S^1}^{r}(G/G)$ and $b\in\ ^{\tau}\vec{K}_{S^1}^{0}(G/G)$, $\mathbb{D}_G(c\otimes b)=\mathbb{D}_T(\omega^*c\otimes\omega^!b)$. 

Under the identifications of the previous lemma the restriction maps 
$\omega^*:\ ^{-\tau-\sigma}\vec{K}_{S^1}^{r}(G/G)\rightarrow\ ^{-\tau-\sigma}\vec{K}_{S^1}^{r}(T/N)$ and
$\omega^*:\ ^{\tau}\vec{K}_{S^1}^{0}(G/G)\rightarrow\ ^{\tau}\vec{K}_{S^1}^{0}(T/N)$
both agree with the restriction of representations $\hat{R}_{\text{pos}}^{\tau}(LG\rtimes S^1)\rightarrow\hat{R}_{\text{pos}}^{\tau}(LT\rtimes S^1)^{hW}$. The equation $\mathbb{D}_G(c\otimes b)=\mathbb{D}_T(\omega^*c\otimes\omega^!b)$ now follows since the discrepancy between $\omega^!$ and $\omega^*$ is precisely the tensor product with $\mathcal{S}^\pm$ that is missing on the positive level side.
\end{proof}

\section{Equivariant Elliptic Cohomology}

In this section the completed $\vec{K}_{S^1}$-theory defined above is used to give a $K$-theoretic picture of equivariant elliptic cohomology at the Tate curve. Despite being the shortest section, it is in some sense the paper's centerpiece. In fact, its shortness speaks to the wonderful simplicity of the Kitchloo-Morava picture of elliptic cohomology at the Tate curve. Fix a compact Lie group $G$. A $G$-\text{equivariant elliptic cohomology theory} is defined to be:
 
\begin{enumerate}[itemsep=0pt]
\item a weakly even $G$-equivariant cohomology theory $E_{G}$, 
\item an elliptic curve $\mathcal{E}$ over $E^{0}(\text{pt})=E^{0}_{G}(G)$,

\item a twist $\tau\in H^{4}(BG)$ for $E_{G}$ with associated transgressed class $\text{tr}(\tau)\in H^3_{S^1}(LBG)$ and central extension $LG^\tau\rtimes S^1\rightarrow LG\rtimes S^1$
\item an isomorphism of $\mathbb{Z}((q))$-modules $^{\tau} E_{G}^{0}(\text{pt})\rightarrow \hat{R}^\tau_\text{pos}(LG\rtimes S^1)$ (c.f. Definition \ref{PERdef})
\item and an isomorphism of formal groups Spf$E^{0}(\mathbb{CP}^{\infty})= E^{0}_{G}(G\times\mathbb{CP}^{\infty})\rightarrow\hat{\mathcal{E}}$,

\end{enumerate}

\subsection{The Equivariant Kitchloo-Morava Construction}

Let $G\text{-Spaces}_\text{rel}$ be the category of pairs $(M,A)$ where $M$ is a $G$-space and $A\subset M$ is $G$-invariant. A class $\tau\in H^4BG$ transgresses to a class $\text{tr}(\tau)\in H^3_{S^1}LBG(=H^3_{S^1}(\mathcal{L}(\text{pt}/G))$ that restricts to zero in $H^3\Omega BG=H^3G$ and hence defines a graded central extension (with trivial grading) $\mathbf{L}^\tau\rightarrow \mathcal{L}(\text{pt}/G)/B\mathbb{Z}$ (c.f. Remark \ref{deg3twist}). Write $p_M$ for the projection $M\rightarrow\text{pt}$. Recall that $\mathcal{L}(M/G)$ has a natural $B\mathbb{Z}$-action (c.f. Example \ref{loopgroupoid}). Define a functor $J_\tau: G\text{-Spaces}_\text{rel}\rightarrow B\mathbb{Z}\text{-}\mathfrak{Twist}_\text{rel}$ by the formula
$$J_\tau(M,A):= (\mathcal{L}(M/G)/B\mathbb{Z},\mathcal{L}(A/G)/B\mathbb{Z},\mathcal{L}(p_M)^*\mathbf{L}^\tau).$$

\begin{prop} For any compact connected Lie group $G$ and $\tau\in H^4BG$ whose transgression to $H^3_{S^1}(LBG)$ defines a strongly topologically regular positive twist the composite functor 
$$^{\tau}\text{E}^{*}_{G}:=\vec{K}_{S^1}^*\circ J_\tau:G\text{-Spaces}_\text{rel}\longrightarrow\mathbb{Z}((q))\text{-mod}$$
$$(M, A)\mapsto\ ^{\tau}\text{E}^{*}_{G}(M,A)$$
defines a $G$-equivariant elliptic cohomology theory at the Tate curve.
\end{prop}
\begin{proof} Given that $\mathcal{L}((M\setminus A)/G)\simeq\mathcal{L}(M/G)\setminus\mathcal{L}(A/G)$, the cohomology axioms follow immediately from their holding for $\vec{K}_{S^1}$. It remains to establish ellipticity as defined above. Take $\mathcal{E}$ to be the Tate curve over $\mathbb{Z}$. By hypothesis $\tau$ defines a strongly topologically regular \textit{positive} twist, so Corollary 3.2.5 says precisely that  $\ ^{\tau} E_{G}^{0}(\text{pt})\simeq \hat{R}_{\text{pos}}^{\tau}(LG_1\rtimes S^1)$. 
\end{proof}

\begin{rem}
When $G$ is disconnected all the definitions still make sense; instead the restriction to connected groups is to preserve the maximum amount of correlation between $^\tau\vec{K}_{S^1}(G/G)$ and the positive energy representation theory of $LG$. Indeed, when $G$ is disconnected one has to introduce \textit{twisted loop groups}\footnote{The `twisted' here is unrelated to the `twisted' in `twisted $K$-theory.'} (c.f. [FHT3] 1.5) and their representation theory, which results in an explosion of notational complexity and bookkeeping that obscures a major virtue of the Kitchloo-Morava construction, namely its simplicity.
\end{rem}

\begin{rem}
Some readers might prefer if item 4 were replaced with something like the following condition: a twist $\tau\in H^{4}(BG)$ for $E_{G}$ with its associated line bundle $\mathcal{L}_{\tau}\rightarrow\text{Bun}_{G}(\mathcal{E})$ over the moduli space of principal $G$-bundles over $\mathcal{E}$. If $G$ is simple and simply-connected then by [A] Theorem D, the Kac character map gives an isomorphism of $\hat{R}_{\text{pos}}^{\tau}(LG_1\rtimes S^1)$ with $\Gamma(\text{Bun}_{G}(\mathcal{E}); \mathcal{L}_{\tau})$. I have chosen to require only a relation to positive energy representation theory so as not to have to invoke the Kac character theory to prove ellipticity and to treat all connected Lie groups uniformly. 
\end{rem}

\begin{rem}
At negative level the construction is still plagued by the problems mentioned in the introduction of this paper and so does not produce an elliptic cohomology theory. That is no surprise, since the completion that defines $\vec{K}_{S^1}$ favors \textit{positive} $q$-powers. On the other hand, the functor $^\tau E_G^*$ makes perfect sense for negative $\tau$ and still produces a $G$-equivariant cohomology theory, just not an elliptic one (at least as per the 5 specifications listed above). This motivates the following section.
\end{rem}

\subsection{Duality in $E_{G}$}
When $G$ is simple and simply-connected, Lemma \ref{duality} immediately implies the following.

\begin{lem} Let $G$ be a simple and simply-connected Lie group of dimension $d$ and let $\sigma$ be the twist associated to the positive energy spin representation of the adjoint representation of $LG$ (c.f. Definition \ref{adjointspin}). There is a natural duality pairing
$$^{-\tau-\sigma}E_{G}^{d}(\text{pt})\otimes\ ^{\tau}E_{G}^{0}(\text{pt})\longrightarrow\  ^{-\sigma}E_{G}^{d}(\text{pt})\simeq\mathbb{Z}((q)).$$
\end{lem}

\subsection{Comparisons}

When $G$ is connected, Grojnowski [G] has constructed the seminal ``delocalised equivariant elliptic cohomology" over the complex numbers as follows (c.f. also [BET] 4.1). The theory takes values in holomorphic sheaves over $\mathbb{H}\times\mathfrak{t}_\mathbb{C}/(W\ltimes\pi_1T^2)$. Recall that every $\tau\in\mathbb{H}$ defines an isomorphism $\mathbb{C}\simeq \mathbb{R}\times \mathbb{R}$. Write $\phi$ for the composite (c.f. [BET] Display (16))
$$\mathbb{H}\times\mathfrak{t}_\mathbb{C}\xrightarrow{\text{canonical}}\mathbb{H}\times \mathfrak{t}\times\mathfrak{t}\xrightarrow{id\times\text{exp}}\mathbb{H}\times T\times T\xrightarrow{\text{pr}} T\times T.$$ 
For a $G$-space $X$ and a point $a\in \mathbb{H}\times\mathfrak{t}_\mathbb{C}$ write $X^a$ for the common fixed point set of the two components of $\phi(a)$, $Z(a)\subset G$ for their common centralizer, and $W_a$ for the Weyl group of $Z(a)$ (which is a subgroup of the Weyl group of $G$). Write $H_T^*(-;\mathbb{C})$ for equivariant singular cohomology with complex coefficients and mod 2 grading and $\mathcal{O}$ for the sheaf of holomorphic functions on both $\mathfrak{t}_\mathbb{C}$ and $\mathbb{H}\times\mathfrak{t}_\mathbb{C}$. Let $U$ be a small neighborhood of $0\in \mathfrak{t}_\mathbb{C}$. Since $H_{T}^*(\text{pt};\mathbb{C})$ is canonically isomorphic to Sym$(\mathfrak{t}_{\mathbb{C}}^{\vee}[-2])$ it acts on $\mathcal{O}(U)$. Since $H_{Z(a)}^*(\text{pt};\mathbb{C})\simeq H_{T}^*(\text{pt};\mathbb{C})^{W_a}$ the latter acts on $\mathcal{O}(U_{a})^{W_a}$.
Note that the second component of $a$ acts on the second factor of $\mathbb{H}\times\mathfrak{t}_\mathbb{C}$ by translation. The action of $W$ and $\pi_1T$ on $\mathfrak{t}$ combine to give a $W\ltimes\pi_1T^2$ action on $\mathbb{H}\times\mathfrak{t}\times\mathfrak{t}$ that is trivial on the first factor. Pulling back along the map `canonical' gives an action of $W\ltimes\pi_1T^2$ on $\mathbb{H}\times\mathfrak{t}_\mathbb{C}$.  Let $U_{a}$ be a sufficiently small $W\ltimes\pi_1T^2$-invariant neighborhood of $a$. For $\tau\in H^3_{S^1}(G/G)$, the associated quadratic form on $H_1(T)$ (c.f. Definition \ref{strongtopreg}) defines an element of $H^2(T\times T)$, and let $\mathcal{L}^\tau$ denote the associated `Looijenga' line bundle. Define the sheaf $^\tau(\textit{Ell}_G^\text{Groj})^*X$ at $U_a$ to be
$$(\textit{Ell}_G^\text{Groj})^*X(U_{a}):=(a^{-1})^*\Big(H_{Z(a)}^*(X^{a};\mathbb{C})\otimes_{H_{Z(a)}^*(\text{pt};\mathbb{C})}a^{*}(\mathcal{O}(U_{a})\otimes\mathcal{L}(U_a))^{W_a}\Big)$$
The evident restriction maps for $W\ltimes\pi_1T^2$-invariant subsets $U_{a'}\subset U_a$ centered around $a'\in U_a$ are isomorphisms by the localization theorem in equivariant cohomology and the fact that fixed point sets  can only shrink locally (i.e. if $g$ does not stabilize $x$ then it does not stabilize a neighborhood of $x$).

\begin{lem}\label{grojcomp} Let $G$ be a connected Lie group, $k\in H^4BG$ such that $\text{tr}(k)\in H^3_{S^1}(G/G)$ is a strongly topologically regular, positive twist, and let $X$ be a $G$-space. Then Grojnowski's sheaves $^k(\textit{Ell}_G^\text{Groj})^*X$ can be recovered from the presheaf of groups on $\mathcal{L}(X/G)$ defined by $U\mapsto\ ^{\text{tr}(k)}\vec{K}_{S^1}^*(U)\otimes\mathbb{C}$.
\end{lem}

\begin{proof} It suffices to show that the groups $H_{Z(a)}^*(X^{a};\mathbb{C})$ and the Looijenga line bundle $\mathcal{L}^k$ can be recovered from the $\vec{K}_{S^1}^*(-)\otimes\mathbb{C}$ presheaf. 
For $a\in\mathbb{H}\times\mathfrak{t}_\mathbb{C}$ write $\phi_1(a)$ and $\phi_2(a)$ for the two components of $\phi(a)$, $Z(a_1)$ and $Z(a_2)$ for their centralizers in $G$, and $X^{a_1}$ and $X^{a_2}$ for their fixed point sets in $X$. Regard ordinary cohomology as $\mathbb{Z}/2$-graded.
By the completion theorem in twisted $K$-theory ([FHT0] Theorem 3.9)
$$H_{Z(a)}^*(X^{a};\mathbb{C})\simeq K^*_{Z(a_1)}(X^{a_1})^{\widehat{}}_{a_2}.$$
Recall that $\mathcal{L}(X/G)$ is a full subgroupoid of ($X\times G)/G$ (where $G$ acts diagonally by the given action on $X$ and by conjugation on $G$) on those objects $(x,g)$ such that $gx=x$. Note that $X^{a_1}/Z(a_1)\simeq (X^{a_1}\times\{a_1\})/Z(a_1$ is a subgroupoid of $\mathcal{L}(X/G)$. Fractional splittings $\psi_{x,g}:G_{x,g}\leftarrow S^1_d$ can always be extended in the $X$-direction since the $B\mathbb{Z}$-action only depends on the second factor. Hence there is a fractional splitting $\psi_{a_1}$ that gives an isomorphism
$$^{\text{tr}(k)}\vec{K}_{S^1}^*(X^{a_1}/Z(a_1))\simeq\  ^{\text{tr}(k)}K^*(X^{a_1}/Z(a_1))((q)).$$
The twist $\text{tr}(k)$, which is pulled back from $\mathcal{L}(\text{pt}/G)$ defines a trivial $X^{a}$-parametrized family of $U(1)$-central extensions of $Z(a)$ (c.f. Lemma \ref{twistfamily}). Restricting to the fiber over $a_2\in Z(a)$ defines a trivial $U(1)$-principle bundle over $X^{a}$, whose associated trivial line bundle is denoted $\mathcal{L}^{\text{tr}(k)_{a}}$ and whose fiber will be denoted by $\mathbb{C}({a})$. Extract a single $q$-power from the right side of the previous display, complexify, and and apply the completion theorem in twisted $K$-theory ([FHT0] Theorem 3.9) to obtain
$$^{\text{tr}(k)}K^*(X^{a_1}/Z(a_1))\otimes\mathbb{C}^{\ \widehat{}}_{a_2} \simeq H_{Z(a)}^*(X^{a};\mathcal{L}^{\text{tr}(k)_{a}})\simeq H_{Z(a)}^*(X^{a};\mathbb{C}(a)).$$
To finish the proof it suffices to show that the vector spaces $\mathbb{C}(a)$ assemble into the Looijenga line bundle $\mathcal{L}^k$ over $T\times T$ as $a$ (or really $\phi(a)$) varies. This is clear since since $\mathbb{C}(a)$ is the fiber over the morphism $\phi(a)\in T\times T\subset(G/G/B\mathbb{Z})_1$ of the line bundle associated to the $U(1)$-extension defined by the twist $\tau$. 
\end{proof}

When $G$ is a torus or simple and simply-connected, Kitchloo [K] has constructed an equivariant elliptic cohomology theory over the complex numbers by defining certain $W\ltimes\pi_1T^2$-equivariant holomorphic sheaves $^k\mathcal{K}_{LG\rtimes S^1}^*LX$ on $\mathbb{H}\times\mathfrak{t}_\mathbb{C}$ as follows. Let $\tau \in H^4BG\simeq\mathbb{Z}$ be an element whose transgression in $H^3_{S^1}(G/G)$ defines a strongly topologically regular, positive twist. Let $\mathcal{F}_\tau^G$ be the space of Fredholm operators on a Hilbert space $\mathcal{H}_\tau$ which is the direct sum of countably many copies of each irreducible level $\tau$ positive energy representation of $LG$. Let $\mathbb{H}$ be the upper half plane. For a finite $G$-CW space $X$, define the sheaves $^\tau\mathcal{K}_{S^1\times T}^*LX$ on $\mathbb{H}\times\mathfrak{t}_\mathbb{C}$ by sheafifying the presheaves
$$U\mapsto \pi_{*\text{mod}2}\text{Maps}(U\times LX,\mathcal{F}_k^G)^T.$$
Then the $^\tau\mathcal{K}_{LG\rtimes S^1}^*LX$ are defined to be the sheaves of $\mathcal{O}(\mathbb{H}\times\mathfrak{t}_\mathbb{C})$-modules whose stalks at a point $(h,a)$ are the following inverse limits over finite $S^1\times T$-CW subspaces of $LX$:
$$\varprojlim_{Y \subset LX }(^\tau\mathcal{K}_{S^1\times T}^*Y)_{(h,a)}\otimes_{R(S^1\times T)}\mathcal{O}(\mathbb{H}\times\mathfrak{t}_\mathbb{C})_{(h,a)}.$$

\begin{lem} Let $G$ be a torus or simple and simply-connected Lie group, $0<\tau\in H^4BG$ and $X$ a $G$-space. Then Kitchloo's sheaves $^k\mathcal{K}_{LG\rtimes S^1}^*LX$ can be recovered from the the presheaf of groups on $\mathcal{L}(X/G)$ defined by $U\mapsto\ ^{\text{tr}(k)}\vec{K}_{S^1}^*(U)\otimes\mathbb{C}$
\end{lem}

\begin{proof} Similarly to the above, let $Y^a\subset Y$ denote the common fixed point set for $\phi_1(a)$, $\phi_2(a)$, and $S^1$ in $S^1\times T$. By Theorem 3.3 of [K], the stalks defined above are isomorphic to the groups
$$\varprojlim_{Y \subset LX }(^\tau K_{S^1\times T}^*Y^a)\otimes_{R(S^1\times T)}\mathcal{O}(\mathbb{H}\times\mathfrak{t}_\mathbb{C})_{(h,a)}.$$
The proof is now complete since the classical twisted $K$-theory groups $(^\tau K_{S^1\times T}^*Y^a)\simeq\ ^\tau K_T^*(Y^a)[q^{\pm}]$ can certainly be recovered from the indicated $\vec{K}_{S^1}^*$-theory presheaf as in the proof of Lemma \ref{grojcomp}.
\end{proof}

\section{Appendix}

\subsection{Example: $U(1)$} The groups $^{\tau}\vec{K}^{*}_{S^1}(U(1)/U(1))$ can be calculated directly. Note that the twist $\tau$ is an element of $H^3_{U(1)}U(1)\simeq\mathbb{Z}$. For twists transgressed from $H^4BU(1)$, i.e. classes in $2\mathbb{Z}$, the calculation is due to Constantin Teleman (unpublished notes). Here is an approach that covers all strongly topologically regular twists $\tau\neq0$.

The general idea us to apply Mayer-Vietoris to the usual decomposition of $U(1)$ into two overlapping arcs $U$ and $V$ which are equivariantly contractible and whose intersection has equivariantly contractible connected components. The automorphism group of each of these four components is a $U(1)$-central extension of a $U(1)$-extension of $S^1$ (c.f. Example \ref{torustwist}, Lemma \ref{twistfamily}), which is non-canonically isomorphic to $U(1)\times U(1)\times S^1$. Applying Lemma \ref{stalksofKhat} and Bott periodicity the Mayer-Vietoris sequence is a 6-term hexagon
$$\begin{tikzcd}
  &R_\star(U(1))((q))^{\oplus2}\arrow[r,"i^*-M^\tau"] &R_\star(U(1))((q))^{\oplus2} \arrow[dr]& \\
  ^\tau\vec{K}_{S^1}^0(U(1)/U(1))\arrow[ur]& & &\arrow[dl]^\tau\vec{K}_{S^1}^1(U(1)/U(1)) \\
  &\arrow[ul]0 &\arrow[l]0 &
\end{tikzcd}$$
exhibiting the desired groups as the kernel and cokernel of the top horizontal map. Write $R(U(1))\simeq\mathbb{Z}[t^\pm]$. From the explicit presentation of the twist $\mathcal{L}^\tau$ in Example \ref{rank1example} the identifications of the four $\vec{K}_{S^1}$-theory stalks with $R_\star(U(1))((q))$ can be chosen such that 3 of the restriction maps are the identity and the fourth sends an element $\sum\chi_k(t)q^k$ to $(qt)^\tau\sum\chi_k(qt)q^k$. Hence the map in question is
$$i^*-M^\tau=\begin{bmatrix}f(t,q)\\ g(t,q) \end{bmatrix}=\begin{bmatrix} f(t,q)-g(t,q)\\ g(t,q)-(tq)^{n}g(qt,q)\end{bmatrix}\in \text{End}\big(\mathbb{Z}[t^\pm]((q))\big)$$
When $\tau>0$ there is no cokernel, and the kernel has a basis
$$\sum_{k\in\mathbb{Z}}t^{j+nk}q^{n\frac{k(k-1)}{2}-kj}\ \ \ j=0,...,n-1$$ 
Some readers may notice that at $\tau=1$ this is the character of the basic level 1 positive energy representation of $LU(1)$ up to a factor of the (shifted) partition function: $\prod_{k>0}(1-q^{k})^{-1}$.

\section*{References}

\

\noindent
[A] M. Ando: Power Operations in Elliptic Cohomology \textit{Transactions of the AMS} \textbf{352} 2000

\

\noindent
[ABG] M. Ando, A. Blumberg, D. Gepner: Twists of K-Theory and TMF https://arxiv.org/pdf/1002.3004.pdf 

\

\noindent
[AS] M. F. Atiyah, I. M. Singer: Index theory for skew-adjoint Fredholm operators \textit{Inst. Hautes
Etudes Sci. Publ. Math.} \textbf{37} 1969

\

\noindent
[BK] M. Brion, S. Kumar: Frobenius Splitting Methods in Geometry and Representation Theory \textit{Birkhauser Progress in Mathematics} \textbf{231}

\

\noindent
[BET] D. Berwick-Evans, A. Tripathy: A de Rham model for complex analytic equivariant elliptic cohomology https://arxiv.org/pdf/1908.02868.pdf

\

\noindent
[Br] A. Broumas: Congruences Between the Coefficients of the Tate Curve via Formal Groups \textit{Pr. American Math. Soc.} \textbf{128} 1999

\

\noindent
[FHT0]  D. Freed, M. Hopkins, and C. Teleman: Twisted equivariant K-theory with complex coefficients \textit{Journal of Topology} \textbf{1}, 2007.

\

\noindent
[FHT1]  D. Freed, M. Hopkins, and C. Teleman: Loop Groups and Twisted K-Theory \textit{Journal of Topology} \textbf{4}, 2011

\

\noindent
[FHT2]  D. Freed, M. Hopkins, and C. Teleman: Loop Groups and Twisted K-Theory II \textit{Journal of Topology} \textbf{4}, 2011

\

\noindent
[FHT3]  D. Freed, M. Hopkins, and C. Teleman: Loop Groups and Twisted K-Theory \textit{Annals of Math} \textbf{174}, 2011.

\

\noindent
[FvdP] J. Fresnel, M. van der Put: Rigid Analytic Geometry and its Applications \textit{Birkhauser Progress in Mathematics} 2004

\

\noindent
[G] I. Grojnowski: Delocalised Equivariant Elliptic Cohomology in Elliptic Cohomology: Geometry, Applications, and Higher Chromatic Analogues \textit{Cambridge University Press}

\

\noindent
[GKV] V. Ginzburg, M. Kapranov, E. Vasserot: Elliptic Algebras and Equivariant Elliptic Cohomology I https://arxiv.org/pdf/q-alg/9505012.pdf

\

\noindent
[K] N. Kitchloo: Quantization of the modular functor and equivariant elliptic cohomology  https://arxiv.org/pdf/1407.6698.pdf

\

\noindent
[KM] N. Kitchloo, J. Morava: Thom Prospectra for Loopgroup Representations 	https://arxiv.org/pdf/ math/0404541.pdf

\

\noindent
[L] J. Lurie: A Survey of Elliptic Cohomology:   http://www.math.harvard.edu/~lurie/papers/survey.pdf

\

\noindent
[N] G. Nishida: The transfer homomorphism in equivariant generalized cohomology theories: \textit{J. Math. Kyoto Univ} \textbf{18-3} 1978

\

\noindent
[P] M. Papikan: Rigid analytic geometry and the uniformization of abelian varieties: http://www.math.psu.edu /papikian/Research/RAG.pdf

\

\noindent
[PS] A. Pressley, G. Segal: Loop Groups \textit{Oxford University Press} 1986

\

\noindent
[Pu] M. van der Put: Serre Duality for Rigid Analytic Spaces \textit{Indagationes Mathematicae} \textbf{3} 1992

\

\noindent
[Se] G. Segal:
The representation-ring of a compact Lie group
\textit{Publications mathématiques de l’I.H.É.S.} \textbf{34} (1968), p. 113-128

\

\noindent
[Se2] G. Segal, Elliptic Cohomology. S´eminaire Bourbaki 695 (1988) 187–201.

\

\noindent
[Se3] G. Segal:
What is an ellptic object?
\textit{Cambridge University Press}  (2010)

\

\noindent
[S] M. Spong: Comparison theorems for torus-equivariant elliptic cohomology theories \textit{Thesis, The University of Melbourne}

\

\noindent
[YT] Y. Tian: Introduction to Rigid Geometry https://www.math.uni-bonn.de/people/tian/Rigid.pdf

\end{document}